\theoremstyle{plain}
\newtheorem{theorem}{Theorem}[section]
\newtheorem{corollary}[theorem]{Corollary}
\newtheorem{lemma}[theorem]{Lemma}
\newtheorem{prop}[theorem]{Proposition}
\begin{document}

\title{Disconnecting strongly regular graphs}
\author{Sebastian M. Cioab\u{a} \footnote{Department of Mathematical Sciences,  University of Delaware, Newark, DE 19707-2553, USA. {\tt cioaba@math.udel.edu}. This work was partially supported by a grant from the Simons Foundation ($\#209309$ to Sebastian M. Cioab\u{a}) and by National Security Agency grant H98230-13-1-0267.} \,, Jack Koolen  \footnote{School of Mathematical Sciences, University of Science and Technology of China, 96 Jinzhai Road, Hefei,
 230026, Anhui, P.R. China. {\tt koolen@ustc.edu.cn}. This research was partially supported by a grant of the `One hundred talent program' of the Chinese government. } \, and Weiqiang Li\footnote{Department of Mathematical Sciences,  University of Delaware, Newark, DE 19707-2553, USA. {\tt weiqiang@udel.edu}}  }
\date{November 22, 2013}
\maketitle

\begin{abstract}
In this paper, we show that the minimum number of vertices whose removal disconnects a connected strongly regular graph into non-singleton components, equals the size of the neighborhood of an edge for many graphs. These include blocks graphs of Steiner $2$-designs, many Latin square graphs and strongly regular graphs whose intersection parameters are at most a quarter of their valency. 
\end{abstract}

\section{Introduction}

A graph $G$ is strongly regular with parameters $v, k, \lambda$ and $\mu$ (shorthanded $(v,k,\lambda,\mu)$-SRG from now on) if it has $v$ vertices, is $k$-regular, any two adjacent vertices have exactly $\lambda$ common neighbors and any two non-adjacent vertices have exactly $\mu$ common neighbors. The study of strongly regular graphs lies at the intersection of graph theory, algebra and finite geometry \cite{BL,Cam,Cam2} and has applications in coding theory and computer science, among others \cite{CamLint,Spielman}.

In this paper, we study the minimum size of a subset of vertices of a connected strongly regular graph whose removal disconnects the graph into non-singleton components. In 1985, Brouwer and Mesner (see \cite{BM} or \cite[Section 9.3]{BH}) used eigenvalue interlacing and Seidel's characterization of strongly regular graphs with minimum eigenvalue $-2$ (see \cite{S} or \cite[Section 9.2]{BH}) to prove that the vertex-connectivity of any connected strongly regular graph equals its valency. Brouwer and Mesner also showed that any disconnecting set of minimum size must be the neighborhood of some vertex. Brouwer and Koolen \cite{BK} proved the same results for distance-regular graphs. An important conjecture of Brouwer \cite{Br1} is to extend these results to any connected graph that is a color class of an association scheme.

In 1996, Brouwer \cite{Br1} conjectured that the minimum size of a disconnecting set of vertices whose removal disconnects a connected $(v,k,\lambda,\mu)$-SRG into non-singleton components equals $2k-\lambda-2$, which is the size of the neighborhood of an edge. Cioab\u{a}, Kim and Koolen \cite{CKK} showed that there are strongly regular graphs for which the above statement does not hold. However, it seems that for many families of strongly regular graphs, Brouwer's Conjecture is true. In this paper, we extend several results from \cite{CKK} and we show that Brouwer's Conjecture is true for any $(v,k,\lambda,\mu)$-SRG with $\max(\lambda,\mu)\leq k/4$. This makes significant progress towards solving an open problem from \cite{CKK} stating that Brouwer's Conjecture is true for any $(v,k,\lambda,\mu)$-SRG with $\lambda<k/2$. We also prove that Brouwer's Conjecture is true for any block graph of a Steiner $2$-$(n,K,1)$-design when $K\in \{3,4\}$ and for any Latin square graph with parameters $(n^2,t(n-1),n-2+(t-1)(t-2),t(t-1))$, when $n\geq 2t\geq 6$. Our results and Neumaier's characterization of strongly regular graphs with fixed minimum eigenvalue \cite{N} enable us to verify the status of Brouwer's Conjecture for all but finitely many strongly regular graphs with minimum eigenvalue $-3$ or $-4$. We also prove that the edge version of Brouwer's Conjecture is true for any connected strongly regular graph; we show that the minimum number of edges whose removal disconnects a $(v,k,\lambda,\mu)$-SRG into non-singletons, equals $2k-2$, which is the edge-neighborhood of an edge.

Our graph theoretic notation is standard (for undefined notions see \cite{BH,GR}). The adjacency matrix of a graph $G$ has its rows and columns indexed after the vertices of the graph and its $(u,v)$-th entry equals $1$ if $u$ and $v$ are adjacent and $0$ otherwise. If $G$ is a connected $k$-regular graph of order $v$, then $k$ is the largest eigenvalue of the adjacency matrix of $G$ and its multiplicity is $1$ (see \cite{BCN,BH,GR}). In this case, let $k=\theta_1>\theta_2\geq \dots \geq \theta_v$ denote the eigenvalues of the adjacency matrix of $G$. If $G$ is a connected $(v,k,\lambda,\mu)$-SRG, then $G$ has exactly three distinct eigenvalues; let $k>\theta_2>\theta_v$ be the distinct eigenvalues of $G$, where $\theta_2=\frac{\lambda-\mu+\sqrt{(\lambda-\mu)^2+4(k-\mu)}}{2}$ and $\theta_v=\frac{\lambda-\mu-\sqrt{(\lambda-\mu)^2+4(k-\mu)}}{2}$ (see \cite{BCN,BH,GR} for more details). Thus, $\theta_2+\theta_v=\lambda-\mu$ and $\theta_2\theta_v=\mu-k$. If $X$ is a subset of vertices of a graph $G$, let $N(X)=\{y\notin X: y\sim x \text{ for some } x \in X\}$ denote the neighborhood of $X$. If $G$ is a $(v,k,\lambda,\mu)$-SRG, then $|N(\{u,v\})|=2k-\lambda-2$ for every edge $uv$ of $G$. We denote by $\kappa_2(G)$ the minimum size of a disconnecting set of $G$ whose removal disconnects the graph into non-singleton components if such a set exists. This parameter has been studied for many families of graphs (see Boesch and Tindell \cite{BT}, Balbuena, Carmona, F\`{a}brega and Fiol \cite{BCFF}, F\`{a}brega and Fiol \cite{FF1,FF2}) or Hamidoune, Llad\'{o} and Serra \cite{HLS} for example). Let $G$ be a connected $(v,k,\lambda,\mu)$-SRG. We say that $G$ is OK if either it has no disconnecting set such that each component has as at least two vertices, or if $\kappa_2(G) = 2k-\lambda-2$.

Let $G$ be a connected graph. If $S$ is a disconnecting set of $G$ of minimum size such that the components of $G\setminus S$ are not singletons, then denote by $A$ the vertex set of one of the components of $G\setminus S$ of minimum size. By our choice of $A$, $|B|\geq |A|$, where $B:=V(G)\setminus (A\cup S)$. As $S$ is a disconnecting set, $N(A)\subset S$ and consequently, $|S|\geq |N(A)|$. Note that it is possible for the disconnecting set $S$ to contain a vertex $y$ and its neighborhood $N(y)$ in which case $y\in S$, but $y\notin N(A)$ and thus, $S\neq N(A)$. In order to prove Brouwer's Conjecture is true for a $(v,k,\lambda,\mu)$-SRG $G$ with vertex set $V$ and $v\geq 2k-\lambda+3$, we will show that $|S|\geq 2k-\lambda-2$ for any subset of vertices $A$ with $3\leq |A|\leq \frac{v}{2}$ having the property that $A$ induces a connected subgraph of $G$. In some situations, we will be able to prove the stronger statement that $|N(A)|>2k-\lambda-2$. Throughout the paper, $S$ will be a disconnecting set of $G$, $A$ will stand for a subset of vertices of $G$ that induces a connected subgraph of $G\setminus S$ of smallest order and $B:=V(G)\setminus (A\cup S)$. As before, $N(A)\subset S$ and thus, $|S|\geq |N(A)|$. Let $a=|A|, b=|B|$ and $s=|S|$. We will need the following results. 

\begin{lemma}[Lemma 2.3 \cite{CKK}]\label{Haemers} If $G$ is a connected $(v,k,\lambda,\mu)$-SRG, then
\begin{equation}\label{Eq1}
 |S|\geq \frac{4ab\mu}{(\lambda-\mu)^2+4(k-\mu)}.
\end{equation}
\end{lemma}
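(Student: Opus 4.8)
The plan is to prove the inequality
\[
|S|\geq \frac{4ab\mu}{(\lambda-\mu)^2+4(k-\mu)}
\]
via eigenvalue interlacing applied to a suitable equitable-like partition of the vertex set. Recall that the graph is partitioned into three parts $A$, $S$, and $B$ with $a=|A|$, $s=|S|$, $b=|B|$, and crucially $A$ and $B$ are non-adjacent since $S$ is a disconnecting set with $N(A)\subset S$. The key structural fact is that the quantity $(\lambda-\mu)^2+4(k-\mu)$ appearing in the denominator equals $(\theta_2-\theta_v)^2$, the squared gap between the two smaller eigenvalues, since $\theta_2+\theta_v=\lambda-\mu$ and $\theta_2\theta_v=\mu-k$ give $(\theta_2-\theta_v)^2=(\theta_2+\theta_v)^2-4\theta_2\theta_v=(\lambda-\mu)^2+4(k-\mu)$.

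First I would form the quotient matrix $Q$ of the adjacency matrix $M$ of $G$ with respect to the partition $\{A,S,B\}$, whose $(i,j)$ entry is the average row sum of the corresponding block of $M$. Because $A$ and $B$ are non-adjacent, the $(A,B)$ and $(B,A)$ entries of $Q$ vanish, so $Q$ has the shape
\[
Q=\begin{pmatrix} * & * & 0 \\ * & * & * \\ 0 & * & * \end{pmatrix},
\]
and the row sums of $Q$ are all equal to $k$ since $G$ is $k$-regular, so $k$ is an eigenvalue of $Q$ with all-ones eigenvector. Next I would invoke eigenvalue interlacing (Haemers' interlacing theorem for quotient matrices), which guarantees that the three eigenvalues $\eta_1\geq\eta_2\geq\eta_3$ of $Q$ interlace the eigenvalues of $M$; in particular $\eta_1=k$, and $\eta_2\geq\theta_v$, $\eta_3\geq\theta_v$, while $\eta_2\leq\theta_2$.

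The main computational step is then to extract information from the trace and second-symmetric-function identities of $Q$. Since $\mathrm{tr}(Q)=\eta_1+\eta_2+\eta_3$ and $k$ is one eigenvalue, I can compute $\eta_2+\eta_3$ and $\eta_2\eta_3$ from the explicit entries of $Q$, which involve the numbers of edges between and within the parts. The cross terms, and in particular the count of edges joining $A$ to $S$, are controlled by the degree sum: every vertex of $A$ has all its neighbors inside $A\cup S$, and similarly for $B$. The product $\eta_2\eta_3$, being bounded below using the interlacing bounds $\eta_2,\eta_3\geq\theta_v$ together with $\eta_2\leq\theta_2$, yields an inequality relating $a$, $b$, $s$ and the eigenvalue gap; isolating $s$ and using $(\theta_2-\theta_v)^2=(\lambda-\mu)^2+4(k-\mu)$ should deliver the claimed bound.

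The hard part, as I see it, is carrying out the interlacing-plus-trace bookkeeping cleanly enough that the parameter $\mu$ emerges in the numerator. The factor $4ab\mu$ is the telltale sign that the argument must use more than the bare partition: one needs the strongly regular structure to count common neighbors, so I expect to supplement the quotient-matrix argument with the combinatorial identity that the number of edges between $A$ and $S$, together with the non-adjacency of $A$ and $B$, forces each pair of vertices $(x,y)$ with $x\in A$, $y\in B$ to have exactly $\mu$ common neighbors all lying in $S$. Double-counting these $\mu$-paths through $S$ is what produces the factor $ab\mu$, and reconciling that count with the eigenvalue bound on $s$ is where the precise constant is pinned down.
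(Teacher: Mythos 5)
The paper does not reprove this lemma; it quotes it as Lemma~2.3 of [CKK], where it is deduced from Haemers' interlacing inequality for two non-adjacent vertex sets --- precisely the three-part quotient-matrix argument you set up. So your skeleton (partition $\{A,S,B\}$, quotient matrix, interlacing, and the identity $(\lambda-\mu)^2+4(k-\mu)=(\theta_2-\theta_v)^2$) is the right one. However, your proposal stops exactly where the proof actually happens: you never carry out the ``isolating $s$'' step, and the mechanism you propose for producing the factor $\mu$ would not work. Double-counting common neighbours gives the true identity $\sum_{z\in S}|N(z)\cap A|\,|N(z)\cap B|=\mu ab$, but to convert this into the stated bound you would need the per-vertex estimate $|N(z)\cap A|\,|N(z)\cap B|\le \frac{(\theta_2-\theta_v)^2}{4}$, which is false in general (in the Clebsch graph $(16,5,0,2)$ the right side is $4$ while a vertex of $S$ can have $2$ neighbours in $A$ and $3$ in $B$); bounding each term by $k^2/4$ instead only yields the weaker $s\ge 4ab\mu/k^2$. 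So ``reconciling that count with the eigenvalue bound'' is not a detail but the missing proof, and the path you sketch for it is a dead end.

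The way $\mu$ actually enters is spectral, not combinatorial: for a strongly regular graph $(M-\theta_2I)(M-\theta_vI)=\mu J$, equivalently $(k-\theta_2)(k-\theta_v)=\mu v$. Concretely, Haemers' inequality (the output of your quotient-matrix interlacing) states
\begin{equation*}
\frac{ab}{(v-a)(v-b)}\;\le\;\left(\frac{\theta_2-\theta_v}{2k-\theta_2-\theta_v}\right)^{2}.
\end{equation*}
Since $(v-a)(v-b)=ab+vs$, this rearranges to $ab\left[(2k-\theta_2-\theta_v)^2-(\theta_2-\theta_v)^2\right]\le vs(\theta_2-\theta_v)^2$, and the bracket factors as $4(k-\theta_2)(k-\theta_v)=4\mu v$, giving $s\ge \frac{4ab\mu}{(\theta_2-\theta_v)^2}$ at once. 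You should either quote Haemers' bound and finish with this two-line computation, or, if you want to derive everything from the quotient matrix yourself, you must state and prove the precise symmetric-function inequality on $\eta_2,\eta_3$ (not merely the one-sided bounds $\eta_2\le\theta_2$, $\eta_3\ge\theta_v$) that yields Haemers' inequality; as written, the proposal asserts the conclusion without a derivation.
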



\begin{prop}\label{2kl2}
Let $G$ be a connected $(v,k,\lambda,\mu)$-SRG and $c\geq 3$ a fixed integer. If $a\geq c$ and 
\begin{equation}\label{eq2kl2}
4(c-2)\left[(k-\mu)\left(k+c-1-\frac{\lambda(c-1)}{c-2}\right)-ck\right]>(\lambda-\mu)^2(2k-\lambda-2)
\end{equation}
then $|S|>2k-\lambda-2$.
\end{prop}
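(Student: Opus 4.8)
The plan is to argue by contradiction: assume $s := |S| \le 2k-\lambda-2$, and extract from this assumption a lower bound on the product $ab$ strong enough to violate hypothesis \eqref{eq2kl2} once it is fed into the Haemers-type bound of Lemma~\ref{Haemers}. So the whole proof is really a competition between a combinatorial lower bound on $ab$ and the algebraic upper bound that $s \le 2k-\lambda-2$ forces through Lemma~\ref{Haemers}.

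First I would use the two constraints available on $a$ and $b$, namely $a \ge c$ and $a \le b$ (the latter because $A$ has smallest order among the components of $G\setminus S$, so $b \ge a$). Since fixing $s$ fixes the sum $a+b = v-s$, and a product with fixed sum is smallest when its factors are most unequal, over the admissible range $c \le a \le (v-s)/2$ the product $ab = a(v-s-a)$ is minimized at $a=c$. Hence $ab \ge c(v-s-c)$, and invoking $s \le 2k-\lambda-2$ gives $ab \ge c\bigl(v-c-(2k-\lambda-2)\bigr)$.

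The next step is to put this bound in closed form using the standard strongly regular vertex count $v = 1 + k + \frac{k(k-1-\lambda)}{\mu}$. A direct computation yields the identity
\begin{equation*}
\mu\bigl(v - c - (2k-\lambda-2)\bigr) = (k-\mu)(k+c-3-\lambda) - k(c-2),
\end{equation*}
so that $ab\mu \ge c\bigl[(k-\mu)(k+c-3-\lambda) - k(c-2)\bigr]$. Writing $D := (\lambda-\mu)^2 + 4(k-\mu)$ and substituting into Lemma~\ref{Haemers} (which reads $s \ge 4ab\mu/D$) produces the chain
\begin{equation*}
D(2k-\lambda-2) \ge D s \ge 4ab\mu \ge 4c\bigl[(k-\mu)(k+c-3-\lambda) - k(c-2)\bigr].
\end{equation*}

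Finally I would show that this contradicts \eqref{eq2kl2}. The content here is purely algebraic bookkeeping: clearing the factor $c-2$ inside \eqref{eq2kl2} and adding $4(k-\mu)(2k-\lambda-2)$, one checks that the left-hand side of \eqref{eq2kl2} augmented by $4(k-\mu)(2k-\lambda-2)$ equals $4c[(k-\mu)(k+c-3-\lambda)-k(c-2)]$; equivalently, \eqref{eq2kl2} is precisely the assertion that $4c[(k-\mu)(k+c-3-\lambda)-k(c-2)] > D(2k-\lambda-2)$. This is the reverse of the displayed chain, giving the contradiction and hence $|S| > 2k-\lambda-2$. I expect the only delicate points to be accuracy in the two simplifications above and the legitimacy of the product-minimization step, which requires $c \le (v-s)/2$ so that $a=c$ genuinely sits in the increasing part of the parabola; this is guaranteed in the regime $v \ge 2k-\lambda+3$ in which the proposition is applied.
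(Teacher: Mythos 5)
Your proposal is correct and follows essentially the same route as the paper's proof: assume $s\le 2k-\lambda-2$, deduce $ab\ge c(v-(2k-\lambda-2)-c)$ from $b\ge a\ge c$, feed this into Lemma~\ref{Haemers}, and observe that the resulting bound contradicts the algebraic reformulation of \eqref{eq2kl2}. Your intermediate identity $\mu(v-c-(2k-\lambda-2))=(k-\mu)(k+c-3-\lambda)-k(c-2)$ and the equivalence with \eqref{eq2kl2} both check out, and the minimization step is automatically legitimate since $a+b\ge 2a\ge 2c$.
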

\begin{proof}
Let $s$ denote the minimum size of a disconnecting set $S$ whose removal leaves only 
non-singleton components. Assume that $s\leq 2k-\lambda-2$. This implies $a+b=v-s\geq
v-(2k-\lambda-2)=v+\lambda+2-2k$. As $b\geq a\geq c$, we obtain $ab\geq
 c(v+\lambda+2-2k-c)$. This inequality,  Lemma \ref{Haemers} and $v=1+k+k(k-\lambda-1)/\mu$ imply
\begin{align*}
s&\geq \frac{4ab\mu}{(\lambda-\mu)^2+4(k-\mu)}\geq\frac{4c(v+\lambda+2-2k-c)\mu}{(\lambda-\mu)^2+4(k-\mu)}\\
&=\frac{4c[k^2+(-\lambda-\mu-1)k+\mu(\lambda+3-c)]}{(\lambda-\mu)^2+4(k-\mu)}\\
&>2k-\lambda-2,
\end{align*}
where the last inequality can be shown that is equivalent to our hypothesis \eqref{eq2kl2} by a straightforward calculation. This contradiction finishes our proof.
\end{proof}

\begin{lemma}\label{Clique}
Let $G$ be a connected $(v,k,\lambda,\mu)$-SRG and let $C$ be a clique with $q\geq 3$ vertices contained in $G$. If $k-2\lambda-1>0$, then $|N(C)|>2k-\lambda-2$.
\end{lemma}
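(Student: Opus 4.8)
The plan is to reduce the problem from the whole clique to a single triangle inside it, and to run a double-counting argument whose dependence on the clique size $q$ conveniently cancels. Since $q\geq 3$, the clique $C$ contains three mutually adjacent vertices, i.e. a triangle $T=\{u,v,w\}\subseteq C$. Any vertex of $N(T)$ lying outside $C$ is adjacent to a vertex of $T\subseteq C$ and hence belongs to $N(C)$, so $N(C)\supseteq N(T)\setminus C$. It therefore suffices to lower bound the number of vertices in $V\setminus C$ that are adjacent to at least one vertex of $T$.

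For $i\in\{1,2,3\}$ I would let $n_i$ be the number of vertices in $V\setminus C$ adjacent to exactly $i$ vertices of $T$, so that the quantity to bound is $n_1+n_2+n_3$. I set up two counts. First, each of the three vertices of $T$ has exactly $k-(q-1)$ neighbors outside $C$, and counting the edges from $T$ into $V\setminus C$ gives $n_1+2n_2+3n_3=3(k-q+1)$. Second, each of the three edges of $T$ has $\lambda$ common neighbors, of which exactly $q-2$ lie in $C$; counting the incidences (edge of $T$, common neighbor outside $C$) gives $n_2+3n_3=3(\lambda-q+2)$, since a vertex adjacent to exactly $i$ vertices of $T$ is a common neighbor of $\binom{i}{2}$ edges of $T$. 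Subtracting the second identity from the first, the $q$-terms cancel and I obtain $n_1+n_2=3(k-\lambda-1)$.

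Consequently $|N(C)|\geq |N(T)\setminus C| = n_1+n_2+n_3\geq 3(k-\lambda-1)$. The conclusion then follows at once from the hypothesis, since $3(k-\lambda-1)-(2k-\lambda-2)=k-2\lambda-1>0$, giving the strict inequality $|N(C)|>2k-\lambda-2$ claimed in the statement.

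The step I expect to be the genuine obstacle is getting a bound that is \emph{uniform in $q$}. If one instead applies the same two moment identities to the whole clique $C$, one is led to a lower bound for $|N(C)|$ that is essentially a cubic in $q$, and this $q$-dependent expression is \emph{not} always larger than $2k-\lambda-2$ under the sole hypothesis $k>2\lambda+1$ (it can dip to or below $2k-\lambda-2$ for intermediate values of $q$ when $\lambda$ is large and $k$ is close to $2\lambda+2$). The device that circumvents this is precisely to localize the count to a single triangle and to the region outside $C$: there the clique-size corrections $q-1$ and $q-2$ enter the edge-count and the pair-count in exactly the same way, so they cancel upon subtraction and leave the clean, $q$-free quantity $3(k-\lambda-1)$. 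Recognizing that a triangle suffices, and that this cancellation occurs, is the crux of the argument.
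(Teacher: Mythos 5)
Your proof is correct and follows essentially the same route as the paper: both arguments reduce to a single triangle in $C$ and use inclusion--exclusion (your two moment identities are just a reorganization of it, with the $q$-dependent terms cancelling) to reach the bound $|N(C)|\geq 3(k-\lambda-1)$, which exceeds $2k-\lambda-2$ exactly when $k-2\lambda-1>0$.
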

\begin{proof}
If $x, y$ and $z$ are three distinct vertices of $C$, then $x, y$ and $z$ have at least $q-3$ common neighbors. Thus,  by inclusion and exclusion, we obtain that
\begin{align*}
|N(C)|&\geq |N(\{x,y,z\})|-(q-3)\\
&\geq 3(k-2)-3(\lambda-1)+(q-3)-(q-3)\\
&=2k-\lambda-2+(k-2\lambda-1)\\
&>2k-\lambda-2.
\end{align*}
\end{proof}

We will also need the following {\em inproduct bound} (see \cite[Lemma 2.2]{BK} for a short proof).
\begin{lemma}[Brouwer and Koolen \cite{BK}]\label{codingtheory}
Among a set of $a$ binary vectors of length $n$ and average weight $w$, there are two with inner product at least $w\left(aw/n-1\right)/(a-1)=\frac{w^2}{n}-\frac{w(n-w)}{n(a-1)}$.
\end{lemma}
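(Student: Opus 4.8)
The plan is a short double-counting argument powered by the Cauchy--Schwarz inequality. Write $v_1,\dots,v_a\in\{0,1\}^n$ for the given vectors and let $\langle x,y\rangle$ denote the usual inner product, which for two binary vectors equals the number of coordinates where both are $1$; in particular $\langle v_i,v_i\rangle=|v_i|$, the weight of $v_i$. The average-weight hypothesis says $\sum_{i=1}^a|v_i|=aw$.

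First I would switch from a row count to a column count. Set $s=\sum_{i=1}^a v_i\in\mathbb{Z}^n$, whose $\ell$-th entry $c_\ell$ records how many of the vectors carry a $1$ in position $\ell$. Counting the ones by columns gives $\sum_{\ell=1}^n c_\ell=aw$, and expanding the square yields
\[
\|s\|^2=\sum_{\ell=1}^n c_\ell^{\,2}=\Big\langle\sum_i v_i,\sum_j v_j\Big\rangle=\sum_{i,j}\langle v_i,v_j\rangle .
\]

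The crux of the argument is to bound $\|s\|^2$ from below by convexity: Cauchy--Schwarz (equivalently, the power-mean inequality) gives $\sum_\ell c_\ell^{\,2}\ge \frac1n\big(\sum_\ell c_\ell\big)^2=a^2w^2/n$. Peeling off the diagonal terms, $\sum_{i,j}\langle v_i,v_j\rangle=aw+\sum_{i\neq j}\langle v_i,v_j\rangle$, so the total inner product over the $a(a-1)$ ordered off-diagonal pairs satisfies $\sum_{i\neq j}\langle v_i,v_j\rangle\ge a^2w^2/n-aw$.

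Finally I would average over those pairs: some ordered pair, and hence some unordered pair $\{v_i,v_j\}$ of distinct vectors, must realize at least the average value
\[
\frac{a^2w^2/n-aw}{a(a-1)}=\frac{w(aw/n-1)}{a-1},
\]
and a one-line manipulation confirms this equals $\frac{w^2}{n}-\frac{w(n-w)}{n(a-1)}$. There is no genuine obstacle here; the only point to keep an eye on is that the individual weights need not equal $w$, but since the diagonal contributes exactly the \emph{total} weight $aw$ and the convexity step uses only $\sum_\ell c_\ell=aw$, this nonuniformity never enters.
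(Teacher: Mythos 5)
Your proof is correct: the column-count identity $\sum_\ell c_\ell^2=\sum_{i,j}\langle v_i,v_j\rangle$, the Cauchy--Schwarz lower bound $a^2w^2/n$, subtracting the diagonal contribution $aw$, and averaging over the $a(a-1)$ ordered pairs all check out, as does the final algebraic identity. The paper itself gives no proof but defers to Lemma 2.2 of Brouwer and Koolen, and your argument is essentially that same standard double-counting proof, so there is nothing further to compare.
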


\section{Brouwer's Conjecture is true when $\max(\lambda,\mu)\leq k/4$}

In this section, we prove that Brouwer's Conjecture is true for all connected $(v,k,\lambda,\mu)$-SRGs with $\lambda$ and $\mu$ relatively small.
\begin{theorem}\label{k/4}
If $G$ is a connected $(v,k,\lambda,\mu)$-SRG with $\max(\lambda,\mu)\leq k/4$, then $\kappa_2(G)=2k-\lambda-2$.
\end{theorem}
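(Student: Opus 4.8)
The plan is to prove both inequalities $\kappa_2(G)\le 2k-\lambda-2$ and $\kappa_2(G)\ge 2k-\lambda-2$. For the upper bound, I first observe that the hypothesis forces $G$ to be large: from $v=1+k+k(k-\lambda-1)/\mu$ together with $\lambda,\mu\le k/4$ one gets $v\ge 4k-3$. Fix an edge $uv$ and remove $S_0:=N(\{u,v\})$, which has size $2k-\lambda-2$. Then $\{u,v\}$ is a (non-singleton) component of $G\setminus S_0$, and any vertex $z$ of the remaining part $B_0:=V\setminus(\{u,v\}\cup S_0)$ is non-adjacent to both $u$ and $v$, hence has at most $2\mu\le k/2$ neighbours in $S_0$ and therefore at least $k-2\mu\ge k/2\ge 2$ neighbours inside $B_0$. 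Thus $G\setminus S_0$ has minimum degree at least $k/2$ on $B_0$, so every component is non-singleton; hence $S_0$ is admissible and $\kappa_2(G)\le 2k-\lambda-2$.

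For the lower bound, let $S$ be a disconnecting set of minimum size leaving only non-singleton components, let $A$ be a smallest component (so $A$ is connected and $a:=|A|\ge 2$), and set $s:=|S|\ge|N(A)|$. If $a=2$ then $A$ is an edge and $s\ge|N(A)|=2k-\lambda-2$, so assume $a\ge 3$. The main tool is a counting argument on the bipartite graph between $A$ and $S$. Since $A$ is a component of $G\setminus S$, every neighbour of a vertex $x\in A$ lies in $A\cup S$, so the characteristic vector of $N(x)\cap S$ is a binary vector of length $s$ and weight $k-\deg_A(x)$; the average weight over $x\in A$ is $w:=k-2e(A)/a$, where $e(A)$ is the number of edges inside $A$. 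For distinct $x,y\in A$ the inner product of these vectors is $|N(x)\cap N(y)\cap S|\le|N(x)\cap N(y)|\le\max(\lambda,\mu)\le k/4$. Applying the inproduct bound (Lemma \ref{codingtheory}) to these $a$ vectors produces two whose inner product is at least $w^2/s-w(s-w)/(s(a-1))$; comparing this with $\max(\lambda,\mu)$ and rearranging yields
\[
s\ \ge\ \frac{a\,w^2}{\,w+(a-1)\max(\lambda,\mu)\,}.
\]

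The key point is that $w$ is controlled by $a$: the average internal degree obeys $2e(A)/a\le a-1$, so $w\ge k-(a-1)$. When $a$ is small or $A$ is sparse, $w$ is close to $k$, and a short computation shows the displayed bound is at least $2k>2k-\lambda-2$ for every $a\ge 4$ (writing the target inequality as $a w^2\ge (2k-\lambda-2)\bigl(w+(a-1)\max(\lambda,\mu)\bigr)$ and using $\max(\lambda,\mu)\le k/4$, it reduces for $w\approx k$ to a positive multiple of $a-3$). The complementary situation is that $A$ is dense, i.e.\ $w$ is small; but $w\ge k-(a-1)$ means this can only happen when $a$ is large, and then Proposition \ref{2kl2} (equivalently, the Haemers bound of Lemma \ref{Haemers} applied with the large values of $a$ and $b$, noting that small $\mu$ forces $v$, hence $b$, to be huge) gives $s>2k-\lambda-2$: once the parameter $c$ is a fixed small fraction of $k$, the dominant term $4ck^2$ on the left of \eqref{eq2kl2} outgrows the right-hand side $(\lambda-\mu)^2(2k-\lambda-2)\le(k^2/16)(2k)$.

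The part I expect to be the main obstacle is twofold: gluing the two regimes across the crossover (where $a$ is a small constant multiple of $k$, so that both the inproduct estimate and Proposition \ref{2kl2} are only barely applicable, forcing a careful choice of the constants and of $c$), and the smallest cases $a=3,4$, where the inproduct estimate is essentially tight. These small cases I would finish as follows. If $A$ is a clique (a triangle when $a=3$, or $K_4$) then $N(A)=S$ and Lemma \ref{Clique} gives $s=|N(A)|>2k-\lambda-2$, since $k-2\lambda-1\ge k/2-1>0$. For the remaining small configurations I would sharpen the counting by replacing the uniform bound $\max(\lambda,\mu)$ with the exact estimates $|N(x)\cap N(y)\cap S|\le\lambda$ for adjacent pairs and $\le\mu$ for non-adjacent pairs; for instance, for the path $P_3$ this improves the denominator from $3w+6\max(\lambda,\mu)$ to $3w+4\lambda+2\mu$, which recovers exactly the slack needed to reach $2k-\lambda-2$ and completes the proof.
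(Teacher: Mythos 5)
Your strategy is genuinely different from the paper's, but as written it is a plan rather than a proof: the two regimes you set up do not meet, and you say so yourself. Concretely, your inproduct estimate $s\ge aw^2/\bigl(w+(a-1)\max(\lambda,\mu)\bigr)$ with $w\ge k-(a-1)$ degrades as $a$ grows (already at $a\approx k/2$ with $A$ dense it gives only $s\gtrsim k$, well below $2k-\lambda-2$), while Proposition \ref{2kl2} needs $a\ge c$ for a $c$ whose admissibility depends on $k,\lambda,\mu$; you never exhibit a single $c$ for which the first argument covers all $3\le a<c$ and the second covers all $a\ge c$. An asymptotic computation suggests such a $c$ (on the order of $k/10$) exists for large $k$, but the lower-order terms and the small-$k$ parameter sets are exactly where the work lies, and none of it is done. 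The small cases are also only sketched: for $a=3$ with $\lambda=0$ the uniform bound genuinely fails (e.g.\ for $(77,16,0,4)$ it yields only $s\ge 29<30=2k-\lambda-2$), and your proposed fix --- replacing $\max(\lambda,\mu)$ by the exact mix $4\lambda+2\mu$ --- requires the averaging form of Lemma \ref{codingtheory} (the average pairwise inner product is at least the stated quantity, which is how the lemma is proved, but is not what it asserts) and still needs a finite verification for small $k$. Your upper-bound argument (that $N(\{u,v\})$ leaves only non-singleton components because every vertex of $B_0$ has at least $k-2\mu\ge k/2$ neighbours in $B_0$) is correct and is a detail the paper elides.

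For comparison, the paper avoids the crossover problem entirely by splitting on a different quantity: the boundary densities $\alpha=e(A,S)/a$ and $\beta=e(B,S)/b$. When both are at most $3k/4$, both induced subgraphs have average degree at least $k/4$, so interlacing forces $\theta_2\ge k/4$ and hence $\theta_v>-4$; this reduces the dense case to conference graphs and SRGs with $\theta_v\in\{-2,-3\}$, which are dispatched by classification and by earlier results of \cite{CKK} for a short explicit list of parameter sets. When $\max(\alpha,\beta)>3k/4$, the inproduct bound is used not to lower-bound $s$ but to produce two vertices with $|N(u)\cap N(v)\cap S|>k/4\ge\max(\lambda,\mu)$, an immediate contradiction. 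The interlacing step is the idea your proposal is missing a substitute for: it is what handles the ``$A$ or $B$ internally dense'' configurations uniformly, which in your scheme must instead be absorbed by the unverified crossover. Until you either carry out that crossover (with explicit constants and a treatment of the residual small parameter sets) or import an argument of comparable strength, the proof is incomplete.
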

\begin{proof}
Let $G$ be a $(v,k,\lambda,\mu)$-SRG with $\max(\lambda,\mu)\leq k/4$. Assume that $S$ is a disconnecting set of vertices of size $s=|S|\leq 2k-\lambda-3$ such that $V(G)\setminus S=A\cup B$ and there are no edges between $A$ and $B$.  Assume that each component of $A$ and $B$ has at least $3$ vertices from now on. Let $s=|S|, a=|A|\geq 3, b=|B|\geq 3$.

We first prove that $b+s\geq a+s\geq 9k/4$. Assume that $a+s<9k/4$. As each component of $A$ has at least $3$ vertices, it means we can find three vertices $x,y,z$ in $A$ that induce a triangle or a path of length $2$. If $x,y,z$ induce a triangle, then $|N(\{x,y,z\})|\geq 3(k-2)-3(\lambda-1)=3k-3\lambda-3$. If $x,y,z$ induce a path of length $2$, then $|N(\{x,y,z\})|\geq 3k-4-(2\lambda+\mu-1)=3k-2\lambda-\mu-3$. In either case, we obtain that $|N(\{x,y,z\})|\geq 3k-3\max(\lambda,\mu)-3$. As $\{x,y,z\}\cup N(\{x,y,z\})\subset A\cup S$, we deduce that $9k/4>a+s\geq 3k-3\max(\lambda,\mu)$ which implies $\max(\lambda,\mu)>k/4$ contradicting our hypothesis. Thus, $b+s\geq a+s\geq 9k/4$.

For two disjoint subsets of vertices $X$ and $Y$, denote by $e(X,Y)$ the number of edges between $X$ and $Y$. Let $\theta_1(X)$ be the largest eigenvalue of the adjacency matrix of the subgraph induced by $X$. Denote by $X^c$ the complement of $X$. Let $\alpha=\frac{e(A,A^c)}{a}=\frac{e(A,S)}{a}$ and $\beta=\frac{e(B,B^c)}{b}=\frac{e(B,S)}{b}$. We consider two cases depending on the values of $\alpha$ and $\beta$.

{\bf Case 1.} $\max(\alpha,\beta)\leq 3k/4$.

We first prove that $\theta_2\geq k/4$. Since $\alpha\leq 3k/4$, the average degree of the subgraph induced by $A$ is $k-\alpha\geq k/4$. Similarly, as $\beta\leq 3k/4$, the average degree of the subgraph induced by $B$ is $k-\beta\geq k/4$. Eigenvalue interlacing (see \cite[Section 2.5]{BH}) implies $\theta_2\geq \min(\theta_1(A),\theta_1(B))\geq \min(k-\alpha,k-\beta)\geq k/4$. As $\theta_2\theta_v=\mu-k>-k$ and $\theta_2\geq k/4$, we deduce that $\theta_v>-4$. This implies $G$ is a conference graph (for which we know the Brouwer's Conjecture is true as proved in \cite{CKK}) or $\theta_v\geq -3$. The case $\theta_v=-2$ was solved completely in \cite{CKK} so we may assume that $\theta_v=-3$. As $\lambda-\mu=\theta_2+\theta_v$, we get $\lambda-\mu\geq k/4-3$. If $\mu\geq 4$, then $\lambda\geq k/4+1$, contradiction. Thus, we may assume $1\leq \mu\leq 3$. Because $\theta_n=-3$, we get $\theta_2=\frac{k-\mu}{3}$. This implies $\lambda=\mu+\theta_2+\theta_n=\mu+\frac{k-\mu}{3}-3=\frac{k+2\mu-9}{3}$. 

If $\mu=1$, then $\lambda=\frac{k-7}{3}>\frac{k}{4}$ when $k>28$. If $k\leq 28$ and $k\equiv 1\pmod 3$, the possible parameter sets are: $(50,7,0,1), (91,10,1,1), (144,13,2,1), (209,16,3,1), (286,19,4,1), \\(375,22,5,1), (476,25,6,1), (589,28,7,1)$. The only parameter sets with integer eigenvalue multiplicities are $(50,7,0,1), (209,16,3,1)$ and $(375,22,5,1)$. A strongly regular graph with $\mu=1$ must satisfy the inequality $k\geq (\lambda+1)(\lambda+2)$ (see \cite{Bag,BN}). This implies there are no strongly regular graphs with parameters $(209,16,3,1)$ and $(375,22,5,1)$. The parameters $(50,7,0,1)$ correspond to the Hoffman-Singleton graph which is OK as proved in \cite{CKK}. 

If $\mu=2$, then $\lambda=\frac{k-5}{3}>\frac{k}{4}$ when $k>20$. If $k\leq 20$ and $k\equiv 2\pmod 3$, the feasible parameter sets are $(16,5,0,2), (33,8,1,2), (56,11,2,2), (85,14,3,2), (120,17,4,2),\\ (161,20,5,2)$. The only parameter sets with integer eigenvalue multiplicities are $(16,5,0,2)$ and $(85,14,3,2)$. The parameter set $(16,5,0,2)$ corresponds to the Clebsch graph which is OK as proved in \cite{CKK}. It is not known whether there exists a strongly regular graphs with the parameters $(85,14,3,2)$. However, this parameter set satisfies Lemma 2.5 from \cite{CKK} so if it exists, such a graph is OK. 

If $\mu=3$, then $\lambda=\frac{k-3}{3}>\frac{k}{4}$ when $k>12$. If $k\leq 12$ and $k\equiv 0\pmod 3$, the feasible parameter sets are $(6,3,0,3), (15,6,1,3), (28,9,2,3)$ and $(45,12,3,3)$. A $(6,3,0,3)$-SRG is isomorphic to $K_{3,3}$ and a $(15,6,1,3)$-SRG is isomorphic to the complement of the triangular graph $T(6)$. By \cite[Proposition 10.2]{CKK}, both these graphs are OK. The other parameter set with integer eigenvalue multiplicities is $(45,12,3,3)$. There are exactly $78$ strongly regular graphs with parameters $(45,12,3,3)$ (see \cite{CDS}) and they are all OK by \cite[Lemma 2.5]{CKK}.

{\bf Case 2.} $\max(\alpha,\beta)>3k/4$.

Assume $\alpha>3k/4$; the case $\beta>3k/4$ is similar (replace $A$ by $B$, $a$ by $b$ and $\alpha$ by $\beta$ in the analysis below) and will be omitted. Applying Lemma \ref{codingtheory} to the characteristic vectors of the neighborhoods (restricted to $S$) of the vertices in $A$, we deduce that there exist two distinct vertices $u$ and $v$ in $A$ such that $|N(u)\cap N(v)\cap S|\geq \frac{\alpha \left(\frac{a\alpha}{s}-1\right)}{a-1}$. As $a+s\geq 9k/4$ and $s\leq 2k-\lambda-3$, we obtain that $a\geq k/4+\lambda+3$. Because $\alpha/s>\frac{3k}{4(2k-\lambda-3)}$, we get that $|N(u)\cap N(v)\cap S|\geq \frac{\alpha \left(\frac{a\alpha}{s}-1\right)}{a-1}>\frac{3k}{4}\cdot \frac{a\cdot \frac{3k}{4(2k-\lambda-3)}-1}{a-1}$. The right-hand side of the previous inequality is greater than $k/4$ if and only if $\frac{a\cdot \frac{3k}{4(2k-\lambda-3)}-1}{a-1}>\frac{1}{3}$. This is equivalent to $a\cdot \frac{k+4\lambda+12}{8k-4\lambda-12}>2$. Since $a\geq k/4+\lambda+3$, the previous inequality is true whenever $\left(k/4+\lambda+3\right)(k+4\lambda+12)>2(8k-4\lambda-12)$. This is the same as 
\begin{equation}\label{klambda}
\left(\frac{k+4(\lambda+3)}{2}\right)^2>16\left(k-\frac{\lambda+3}{2}\right),
\end{equation}
which is equivalent to
\begin{equation}\label{klambda2}
k+4(\lambda+3)>8\sqrt{k-\frac{\lambda+3}{2}}.
\end{equation}
For $\lambda\geq 1$, this inequality is true as 
$k+4(\lambda+3)\geq 4\sqrt{k(\lambda+3)}\geq 8\sqrt{k}>8\sqrt{k-\frac{\lambda+3}{2}}$.

For $\lambda=0$, the inequality \eqref{klambda2} is true for all $k$ except when $8\leq k\leq 32$. As $\mu\leq k/4$, this implies that $1\leq \mu\leq 8$. We show that the condition 
\begin{equation}\label{CKK25}
4(k-2\lambda)(k-\mu)>(\lambda-\mu)^2(2k-\lambda-3)
\end{equation}
of Proposition 2.4 from \cite{CKK} is satisfied in each of these cases, therefore showing that $G$ is OK and finishing our proof. The inequality above is the same as $4k(k-\mu)>\mu^2(2k-3)$. As $k-\mu\geq 3\mu$, the previous inequality is true whenever $4k(3\mu)>\mu^2(2k-3)$ which is true when $12k>\mu(2k-3)$. This last inequality is true whenever $1\leq \mu\leq 6$.


If $\mu=7$, then \eqref{CKK25} is $4k(k-7)>49(2k-3)$ which holds when $k\geq 31$. As $k\geq 4\mu=28$, the previous condition will be satisfied except when $k\in \{28,29,30\}$. But in this situation, the strongly regular graph does not exist. This is because $\theta_2+\theta_v=\lambda-\mu=-7$ and $\theta_2\theta_v=\mu-k\in \{-21,-22,-23\}$ which is impossible as $\theta_2$ and $\theta_v$ are integers. 

If $\mu=8$ and $k=32$, the graph would have parameters $(157,32,0,8)$. However, such a graph does not exist as $\theta_2$ and $\theta_v$ would have to be integers satisfying $31=k-\lambda-1=-(\theta_2+1)(\theta_v+1)$ and $\theta_2\theta_v=\mu-k=-24$ which is again impossible as $\theta_2$ and $\theta_v$ are integers. This finishes our proof.
\end{proof}

We checked the tables with feasible parameters for strongly regular graphs on Brouwer's homepage \cite{Bwebpage}. The following parameters satisfy the condition $\max(\lambda,\mu)\leq k/4$ and are not parameters of block graphs of Steiner $2$-designs or Latin square graphs with less than $200$ vertices: 
$(45,12,3,3), (50,7,0,1), (56,10,0,2),(77,16,0,4),(85,14,3,2)^?,(85,20,3,5), (96,19,2,4), 
\\(96,20,4,4), (99,14,1,2)^?,(115,18,1,3)^?, (125,28,3,7), (133,24,5,4)^?, (133,32,6,8)^?, 
\\(156,30,4,6), (162,21,0,3)^?,(162,23,4,3)^?,(165,36,3,9), (175,30,5,5), (176,25,0,4)^?, 
\\(189,48,12,12)^?, (196,39,2,9)^?$. The existence of the graphs with ``$?$" is unknown at this time. 

\section{Brouwer's Conjecture for the block graphs of Steiner $2$-designs}

A Steiner 2-$(n,K,1)$-design is a point-block incidence structure on $n$ points, such that each block has $K$ points and any two distinct points are contained in exactly one block. The block graph of such a design has as vertices the blocks of the design and two distinct blocks are adjacent if and only if they intersect. The block graph of a Steiner 2-$(n,K,1)$-design is a strongly regular graph with parameters $\left(\frac{n(n-1)}{K(K-1)}, \frac{K(n-K)}{K-1}, (K-1)^2+\frac{n-1}{K-1}-2, K^2\right)$. When $K\geq 5$, Theorem \ref{k/4} implies that when $n\geq 4K^2+5K+24+\frac{96}{K-4}$, the associated strongly regular graph satisfies Brouwer's Conjecture. In the next two sections, we improve this result when $K\in \{3,4\}$.

\subsection{Block graphs of Steiner triple systems}

A Steiner 2-$(n,3,1)$-design is called a Steiner triple system of order $n$ or STS(n). It is known that a STS(n) exists if and only if $n\equiv 1,3$ mod 6. If $n=7$, then the block graph of a STS(7) is the complete graph $K_7$. When $n\geq 9$, the block graph of a STS(n) is a strongly regular graph with parameters $\left(\frac{n(n-1)}{6}, \frac{3(n-3)}{2}, \frac{n+3}{2}, 9\right)$ and $2k-\lambda-2=\frac{6(n-3)}{2}- \frac{n+3}{2}-2=\frac{5n-25}{2}$. If $\delta$ and $\gamma$ are distinct points of a STS(n), we denote by $\{\delta,\gamma,*\}$ the block of the STS(n) containing both $\delta$ and $\gamma$.

In this section, we prove that any strongly regular graph that is the block graph of a Steiner triple system, satisfies Brouwer's Conjecture. 
\begin{theorem}\label{STS}
For every $n\geq 7, n\equiv 1,3\pmod{6}$, if $G$ is the block graph of a STS(n), then $\kappa_2(G)=2k-\lambda-2=\frac{5n-25}{2}$. Equality happens if and only if the disconnecting set is the neighborhood of an edge.
\end{theorem}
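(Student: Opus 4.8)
The plan is to prove both bounds $\kappa_2(G)\le \frac{5n-25}{2}$ and $\kappa_2(G)\ge \frac{5n-25}{2}$ and to track the equality case. For the upper bound I would fix an edge $uv$ (two intersecting blocks) and delete $N(\{u,v\})$; what remains besides $u,v$ are the blocks disjoint from both $u$ and $v$, which live inside an $(n-6)$-point subset, and for $n\ge 13$ there are enough of them to form a non-singleton piece. This exhibits a disconnecting set of size exactly $2k-\lambda-2=\frac{5n-25}{2}$. The cases $n\in\{7,9\}$ are degenerate: the block graphs are $K_7$ and the complete multipartite graph $K_{3,3,3,3}$, neither of which admits a disconnecting set whose components all have at least two vertices, so $G$ is vacuously OK there.

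For the lower bound, let $S$ be a disconnecting set whose smallest component has vertex set $A$ with $a=|A|\ge 3$; the case $a=2$ is precisely the edge-neighborhood and already gives $|S|\ge |N(A)|=2k-\lambda-2$. The central idea is to pass from the graph to the underlying design. Let $P$ be the set of points covered by the blocks of $A$ and set $p=|P|$. Counting incidences between $P$ and the blocks, and using that every pair of points lies in a unique block, I would establish the identity
\[
|N(A)| \;=\; \frac{p(n-p)}{2} + N_3 \;\ge\; \frac{p(n-p)}{2},
\]
where $N_3$ counts the blocks lying entirely inside $P$ but not in $A$. Concretely, writing $B_i$ for the number of blocks with exactly $i$ points in $P$ and $r=\frac{n-1}{2}$ for the replication number, one has $\binom{p}{2}=3B_3+B_2$ and $pr=B_1+2B_2+3B_3$; eliminating $B_1,B_2$ and subtracting the $a$ blocks of $A$ yields the displayed formula.

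It then remains to control the range of $p$. Since $A$ induces a connected subgraph on $a\ge 3$ blocks, a spanning-tree ordering of the blocks gives $6\le p\le 2a+1$. For the upper cutoff I would use that every block of $B:=V\setminus(A\cup S)$ is disjoint from every block of $A$ and hence lies inside the complementary point set $P^c$, so $b=|B|\le \binom{n-p}{2}/3$. Combining $b\ge a\ge \frac{p-1}{2}$ with this bound rules out $p\ge n-5$ as soon as $n\ge 13$, leaving $6\le p\le n-6$. On this interval $p(n-p)\ge 6(n-6)$, whence $|N(A)|\ge 3n-18>\frac{5n-25}{2}$ for all $n\ge 13$. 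The inequality is strict, which forces any minimum disconnecting set to have $a=2$, and then $N(A)\subseteq S$ with $|N(A)|=|S|$ gives $S=N(\{u,v\})$, establishing the equality characterization.

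The main obstacle is the large-$p$ regime, where the quadratic $p(n-p)$ becomes small and the identity alone is useless: here $|N(A)|$ cannot be bounded locally, and one must instead invoke the global constraint that $B$ is confined to $P^c$ together with the minimality hypothesis $b\ge a$ to show this regime cannot occur at all once $n\ge 13$. (Alternatively, the point-pencil subcase can be handled by Lemma~\ref{Clique}, which applies as soon as $k-2\lambda-1=\frac{n-17}{2}>0$, and very large $A$ by Proposition~\ref{2kl2}, but the point-counting identity dispatches the whole window $6\le p\le n-6$ uniformly.) Only light additional bookkeeping is then needed at the boundary values of $n$ near $13$ and to confirm that the extremal edge-neighborhood set genuinely leaves non-singleton components.
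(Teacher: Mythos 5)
Your argument is correct, and it takes a genuinely different route from the paper's. The paper proves the lower bound by casework: it handles $n\in\{7,9,13\}$ by citation, treats $a=3$ by explicit common-neighbour counts for triangles and paths, uses the eigenvalue bound of Lemma~\ref{Haemers} for $n=15$, invokes Proposition~\ref{2kl2} for $n\in\{19,21,25,27\}$, and only for $n\geq 31$ introduces the point set $P$, where it bounds $|A|\leq\binom{p}{2}/3$ and $|B|\leq\binom{n-p}{2}/3$ separately to get $|S|\geq p(n-p)/3$ (splitting further into the clique case via Lemma~\ref{Clique} and the non-clique case via $|N(\{x,y\})|$ for a non-adjacent pair). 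Your exact identity $|N(A)|=\tfrac{p(n-p)}{2}+N_3$, which I have verified (from $pr=B_1+2B_2+3B_3$, $\binom{p}{2}=B_2+3B_3$ and $|N(A)|=B_1+B_2+B_3-a$, and which reassuringly returns $\tfrac{5n-25}{2}$ when $a=2$, $p=5$), is strictly stronger than the paper's $p(n-p)/3$ bound and lets you dispatch every $a\geq 3$ and every $n\geq 13$ uniformly, with no spectral input at all: after excluding $p\geq n-5$ via $b\geq a$ and the confinement of $B$ to $P^c$, the window $6\leq p\leq n-6$ gives $|N(A)|\geq 3n-18>\tfrac{5n-25}{2}$ for $n\geq 13$, which also yields the equality characterization. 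What the paper's heavier machinery buys is reusability — Lemma~\ref{Haemers}, Proposition~\ref{2kl2} and Lemma~\ref{Clique} are deployed again for Steiner $2$-$(n,4,1)$ designs and Latin square graphs — whereas your identity is specific to Steiner systems (though it plainly adapts to block size $K$). Two small points to tighten: the lower bound $p\geq 6$ does not follow from the spanning-tree count (which only gives $p\leq 2a+1$) but from the separate observation that three distinct triples of a Steiner triple system cannot lie in five points; and for $n=13$ the upper-bound construction needs the extra remark that no block disjoint from $u\cup v$ can be isolated among the remaining $\tfrac{(n-7)(n-9)}{6}=4$ such blocks, since otherwise the other three would be confined to five points. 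Neither affects the soundness of the argument.
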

\begin{proof} We begin our proof with some small values of $n$. When $n=7$, the block graph of a STS(7) is the complete graph $K_7$ which is OK. When $n=9$, the block graph of a STS(9) has parameters $(12,9,6,9)$ which is OK by \cite[ Lemma 2.1]{CKK}. When $n=13$, the block graph of a STS(13) has parameters $(26,15,8,9)$ which is OK by \cite[Example 10.3]{CKK}.

Assume $n\geq 15$ for the rest of the proof. 

If $a=3$, then we have two cases: 
\begin{enumerate}

\item The set $A$ induces a triangle. 

We show that the vertices of $A$ have at least $3$ common neighbors outside $A$. If the three blocks in $A$ have non-empty intersection, we may assume they are $\{1,2,3\}$, $\{1,4,5\}$ and $\{1,6,7\}$. These vertices have at least $\frac{n-7}{2}\geq 4$ common neighbors. If the three blocks in $A$ have an empty intersection, we may assume they are $\{1,2,3\}$, $\{2,4,5\}$ and $\{3,5,6\}$. These vertices have at least three common neighbors: $\{1,5,*\}$, $\{3,4,*\}$ and $\{2,6,*\}$. In either situation, inclusion and exclusion implies that
$|S|\geq |N(A)|\geq 3(k-2)-3(\lambda-1)+3=3n-18>\frac{5n-25}{2}$ as $n>11$. 

\item The set $A$ induces a path of length $2$. 

We may assume that the vertices of $A$ are $\{1,2,3\}$, $\{2,4,5\}$ and $\{4,6,7\}$. These vertices have at least four common neighbors:  $\{1,4,*\}$, $\{3,4,*\}$, $\{2,6,*\}$ and $\{2,7,*\}$. By inclusion and exclusion, we get that $|S|\geq |N(A)|\geq k-2+2(k-1)-2\lambda-(\mu-1)+4=\frac{7n-49}{2}>\frac{5n-25}{2}$ as $n>12$.
\end{enumerate}

This finishes the proof of the case $a=3$. 

Assume that $a\geq 4$. When $n=15$,  Lemma \ref{Haemers} implies $|S|\geq ab\geq 4\times (31-|S|)$ which gives $|S|\geq 25=2k-\lambda-2$. We now show that $|S|>25$. If $|S|=25$, then $a+b=10$ and $a\leq b$ imply that $a\in \{4,5\}$. There are three cases to consider: 
\begin{enumerate}
\item The set $A$ induces a clique of size 4.

Let $x,y$ and $z$ be three distinct vertices of $A$. Inclusion and exclusion implies that $|S|\geq |N(A)|\geq |N(\{x,y,z\})|-1\geq 3(k-2)-3(\lambda-1)+3-1=3n-19>\frac{5n-25}{2}$ as $n>13$.

\item The set $A$ induces a clique of size 5. 

We claim that there are three vertices $x,y,z\in A$, whose blocks have non-empty intersection. Otherwise, we may assume $A$ contains three vertices of the form: $\{1,2,3\}$, $\{2,4,5\}$, $\{3,5,6\}$. Since $A$ has two other vertices, one may be $\{1,4,6\}$, and the other one must be one of the following forms: $\{1,5,*\}$, $\{3,4,*\}$ and $\{2,6,*\}$. In each situation, there will be three blocks with non-empty intersection. Assume that $x,y,z\in A$, and the three blocks corresponding to these vertices have non-empty intersection. Then they have at least $\frac{n-7}{2}\geq 4$ common neighbors. Inclusion and exclusion implies that $|S|\geq |N(A)|\geq |N(\{x,y,z\})|-2\geq 3(k-2)-3(\lambda-1)+4-2=3n-19>\frac{5n-25}{2}$ as $n>13$.

\item The set $A$ does not induces a clique. 

As $A$ induces a connected subgraph, we can find three vertices $x,y,z$ that induce a path of length 2. Inclusion and exclusion implies that $|S| \geq |N(A)|\geq |N(\{x,y,z\})|-2\geq k-2+2(k-1)-2\lambda-(\mu-1)+4-2=\frac{7n-53}{2}>\frac{5n-25}{2}$ as $n>14$.

\end{enumerate}

When $n\in \{19,21,25,27\}$, the parameters of $G$ satisfy Proposition \ref{2kl2} (with $c=4$), thus the disconnecting set is greater than $2k-\lambda-2$. 

Assume that $n\geq 31$ from now on. Let $p$ denote the number of points contained in the blocks corresponding to the vertices of $A$. We may assume that $p\leq n-p$. Otherwise, we can choose another component of $V\backslash S$ as our $A$. Obviously, $a\leq \frac{p(p-1)}{6}$. If $A$ does not induce a clique, let $x$ and $y$ be two non-adjacent vertices of $A$. Then
\begin{align*}
|S|&\geq |N(A)|\geq |N(\{x,y\})|-|A\setminus \{x,y\}|=2k-\mu-(a-2)\\
&=2k-\mu-a+2=3n-16-a.
\end{align*}
If $a<\frac{n-7}{2}$, then $|S|\geq 3n-16-a>\frac{5n-25}{2}$ and we are done. Otherwise, if $a\geq \frac{n-7}{2}$, then $\frac{p(p-1)}{6}\geq a\geq \frac{n-7}{2}$ which implies $p>\sqrt{3(n-7)}$. Now $B$ is spanned by at most $n-p$ points and thus, $|B|\leq \frac{(n-p)(n-p-1)}{6}$. Thus,
\begin{equation*}
|S|=|V|-|A|-|B|\geq \frac{n(n-1)}{6}-\frac{p(p-1)}{6}-\frac{(n-p)(n-p-1)}{6}=\frac{p(n-p)}{3}.
\end{equation*}
As $n\geq 31$, $p>\sqrt{3(n-7)}\geq \sqrt{72}$, so $p\geq 9$. Thus, $|S|\geq \frac{p(n-p)}{3}\geq 3(n-9)>\frac{5n-25}{2}$ since $n>29$. If $A$ induces a clique, then $k-2\lambda-1=\frac{n-17}{2}>0$ and Lemma \ref{Clique} imply that $|S|\geq |N(A)|>2k-\lambda-2$. This finishes our proof.
\end{proof}

\subsection{Block graphs of Steiner 2-$(n,4,1)$-designs}

A Steiner 2-$(n,4,1)$-design exists if and only if $n\equiv 1,4$ mod 12. When $n=13$, the block graph of such a design is the complete graph $K_{13}$. When $n\geq 16$, the block graph of a Steiner 2-$(n,4,1)$-design is a strongly regular graph with parameters $\left(\frac{n(n-1)}{12}, \frac{4(n-4)}{3}, \frac{n+20}{3}, 16\right)$ and $2k-\lambda-2=\frac{8(n-4)}{3}-\frac{n+20}{3}-2=\frac{7n-58}{3}$. 

In this section, we prove that any strongly regular graph that is the block graph of a Steiner 2-$(n,4,1)$-design, satisfies Brouwer's Conjecture. 
\begin{theorem}\label{SQS}
For $n\geq 13, n\equiv 1,4\pmod{12}$, if $G$ is the block graph of a 2-$(n,4,1)$-design, then $\kappa_2(G)=2k-\lambda-2=\frac{7n-58}{3}$. Equality happens if and only if the disconnecting set is the neighborhood of an edge.
\end{theorem}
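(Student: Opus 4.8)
The plan is to follow the same template as the proof of Theorem~\ref{STS}, adapting every estimate to the parameters $(v,k,\lambda,\mu)=\left(\frac{n(n-1)}{12},\frac{4(n-4)}{3},\frac{n+20}{3},16\right)$ of the block graph of a Steiner $2$-$(n,4,1)$-design. First I would dispose of the small cases: $n=13$ gives $K_{13}$, which is OK, and for the next few admissible values of $n$ (those $\equiv 1,4 \pmod{12}$ just above $16$) I would check that either the parameters satisfy Proposition~\ref{2kl2} with $c=3$ or $c=4$, or that the graph is handled by one of the lemmas from \cite{CKK}. For all of these it suffices to verify the single polynomial inequality \eqref{eq2kl2} numerically. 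The bulk of the proof is then the generic argument for large $n$, split according to the size of the smaller component $A$.

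Next I would treat the case $a=|A|=3$ exactly as in Theorem~\ref{STS}: pick three vertices of $A$ forming either a triangle or a path of length $2$ among the blocks, and count common neighbors via inclusion–exclusion. For three mutually intersecting blocks (a triangle) I expect a bound of the form $|N(A)|\geq 3(k-2)-3(\lambda-1)+(\text{extra common neighbors})$, and for a path $|N(A)|\geq k-2+2(k-1)-2\lambda-(\mu-1)+(\text{extra})$. Substituting the Steiner $2$-$(n,4,1)$ parameters, each of these should exceed $\frac{7n-58}{3}$ for all sufficiently large $n$, so the only work is determining the precise threshold on $n$ and counting the guaranteed extra common neighbors (which now arise from pairs of points forced into shared blocks, using $K=4$ rather than $K=3$). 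I would then handle a small window of intermediate $a$ (say $a=4$ and any other values left over) by the same clique-versus-nonclique case split used for $n=15$ in Theorem~\ref{STS}, invoking Lemma~\ref{Haemers} to bound $ab$ from below.

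The main argument for large $n$ and large $a$ is the counting bound based on the number $p$ of points covered by the blocks in $A$. One shows $a\leq \binom{p}{2}/\binom{4}{2}=\frac{p(p-1)}{12}$, and, taking $A$ to be the side covering fewer points so that $p\leq n-p$, that $|B|\leq \frac{(n-p)(n-p-1)}{12}$. When $a$ is below a suitable threshold I would use the non-adjacency estimate $|S|\geq |N(\{x,y\})|-(a-2)=2k-\mu-a+2$ to conclude, and when $a$ is above that threshold I would use
\[
|S|=|V|-|A|-|B|\geq \frac{n(n-1)}{12}-\frac{p(p-1)}{12}-\frac{(n-p)(n-p-1)}{12}=\frac{p(n-p)}{6},
\]
together with a lower bound on $p$ forced by $a$ being large, to beat $\frac{7n-58}{3}$. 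The remaining clique case is settled by Lemma~\ref{Clique}, since $k-2\lambda-1=\frac{4(n-4)}{3}-\frac{2(n+20)}{3}-1=\frac{2n-75}{3}>0$ for $n$ large.

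The hard part will be the bookkeeping at the boundaries between cases: with $K=4$ the blocks carry four points, so the inclusion–exclusion counts of forced common neighbors and the threshold separating the two large-$a$ subcases must be recomputed from scratch, and one must make sure the thresholds on $a$ and on $n$ line up so that no admissible value of $n$ slips through uncovered. In particular I expect the crossover value of $a$ (analogous to $\frac{n-7}{2}$ in the STS case) and the induced lower bound on $p$ to be the delicate quantities, and I would want to confirm that for every $n\equiv 1,4\pmod{12}$ not already handled as a small case, the chosen constants make all the displayed inequalities strict.
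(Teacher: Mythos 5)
Your overall strategy matches the paper's: small cases first, direct inclusion--exclusion for small $a$, Proposition~\ref{2kl2} for a middle regime, and the point-counting argument ($a\leq p(p-1)/12$, $|B|\leq (n-p)(n-p-1)/12$, $|S|\geq p(n-p)/6$) for large $n$, with Lemma~\ref{Clique} disposing of cliques. However, there is a genuine gap in how you propose to cover the middle range of $n$. You plan to invoke Proposition~\ref{2kl2} only with $c=3$ or $c=4$, and to rely on the point-counting argument once $n$ is large. Neither tool reaches far enough. With $k=\frac{4(n-4)}{3}$, $\lambda=\frac{n+20}{3}$, $\mu=16$, the left side of \eqref{eq2kl2} grows only quadratically in $n$ while the right side grows cubically, so the inequality holds only on a bounded window depending on $c$: for $c=3$ it already fails at $n=25$ (the left side is $4[(12)(30-30)-84]=-336$ against a right side of $39$), and for $c=4$ it fails for all $n\gtrsim 64$. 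At the other end, the point-counting bound $|S|\geq\frac{\sqrt{2(n-16)}\,\bigl(n-\sqrt{2(n-16)}\bigr)}{6}$ exceeds $\frac{7n-58}{3}$ only for $n\geq 127$; at $n=100$ it gives about $188$ against $2k-\lambda-2=214$. Consequently, for roughly $25\leq n\leq 126$ and $a$ not small (say $a\geq\frac{n-16}{3}$, or even just $a\geq 6$), none of your cases applies: the bounds $|N(\{x,y,z\})|-(a-3)$ degrade with $a$, and Lemma~\ref{Haemers} with only $a\geq 4$ is exactly Proposition~\ref{2kl2} with $c=4$, which fails there. The paper closes this window by handling $3\leq a\leq 5$ directly for $25\leq n\leq 126$ and then applying Proposition~\ref{2kl2} with the larger constants $c=6$ (for $25\leq n\leq 107$) and $c=7$ (for $108\leq n\leq 126$), together with a separate treatment of $a=6$ in the latter range. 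Identifying these larger values of $c$ and verifying that $a\geq c$ may be assumed is the crux of the argument, not mere bookkeeping at the boundaries.

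A minor arithmetic point: $k-2\lambda-1=\frac{4(n-4)}{3}-\frac{2(n+20)}{3}-1=\frac{2n-59}{3}$, not $\frac{2n-75}{3}$; this does not affect the clique case since both quantities are positive in the relevant range.
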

\begin{proof}
When $n=13$, the block graph of a 2-$(13,4,1)$-design is the complete graph $K_{13}$ which is OK. When $n=16$, the block graph of a 2-$(16,4,1)$-design is strongly regular graph with parameters $(20,16,12,16)$. This is a complete multipartite graph which is OK by \cite[Lemma 2.1]{CKK}. 

Assume that $25\leq n\leq126$. Suppose first that $3\leq a\leq 5$. We have two possible cases:
\begin{enumerate}
\item The set $A$ contains three vertices $x, y$ and $z$ that form a triangle. We consider two subcases depending on whether or not the blocks of $x, y, z$ have nonempty intersection.  

\begin{enumerate}

\item The blocks corresponding to $x, y$ and $z$ have non-empty intersection. 

We may assume that $x=\{1,2,3,4\}, y=\{1,5,6,7\}$ and $z=\{1,8,9,10\}$. In this case, $x, y$ and $z$ have at least $\frac{n-10}{3}\geq 5$ common neighbors. 

\item The blocks corresponding to $x, y$ and $z$ have an empty intersection. 

If $\delta$ and $\gamma$ are distinct points of the Steiner 2-design, denote by $\{\delta,\gamma,*,*\}$ the block of the Steiner system containing $\delta$ and $\gamma$. We may assume that $x=\{1,2,3,4\}, y=\{2,5,6,7\}$ and $z=\{1,5,8,9\}$. In this case, $x, y$ and $z$ have at least 6 common neighbors: $\{1,6,*,*\}$,$\{1,7,*,*\}$, $\{2,8,*,*\}$, $\{2,9,*,*\}$, $\{3,5,*,*\}$, $\{4,5,*,*\}$.
\end{enumerate}
Inclusion and exclusion and $n\geq 25$ imply that
\begin{align*} 
|S|&\geq |N(A)| \geq |N(\{x,y,z\})|-(a-3) \\
           &\geq \left[3(k-2)-3(\lambda-1)+5\right]-2\\
           &=3n-36\geq\frac{7n-58}{3}.
\end{align*}
If equality holds above, then $n=25, a=5, b=6$ and $s=39$. However, Lemma \ref{Haemers} implies that
\begin{equation}\label{nabs}
|S|\geq \frac{4\times 5\times 6\times 16}{(15-16)^2+4\times (28-16)}\approx 39.1836
\end{equation}
which is a contradiction that finishes the proof of this subcase.

\item The subgraph induced by $A$ contains no triangles. 

Let $x\sim y\sim z$ denote an induced path of length $2$ that is contained in the subgraph induced by $A$. We may assume that $x=\{1,2,3,4\}, y=\{4,5,6,7\}$ and $z=\{7,8,9,10\}$. These vertices have at least 6 common neighbors: 
$\{4,8,*,*\}$, $\{4,9,*,*\}$, $\{4,10,*,*\}$, $\{1,7,*,*\}$, $\{2,7,*,*\}$, $\{3,7,*,*\}$. Inclusion and exclusion and $n\geq 25$ imply that
\begin{align*}
|S|&\geq |N(A)|\geq |N(\{x,y,z\})|-(a-3) \\
          &\geq \left[k-2+2(k-1)-2\lambda-(\mu-1)+6\right]-2\\
          &=\frac{10n-133}{3}\geq\frac{7n-58}{3}.
\end{align*}
If equality holds above, then $n=25, a=5, s=39$ and $b=6$. By the same argument as in \eqref{nabs}, we obtain again a contradiction.  
\end{enumerate}

Suppose now that $a\geq 6$ and $25\leq n\leq 107$. The hypothesis of Proposition \ref{2kl2} (with $c=6$) holds if $f(n)=-7n^3/27 + 326n^2/9 - 2848n/3 + 139168/27>0$. As the roots of the polynomial $f(n)$ are approximately $107.3212,  24.9760, 7.4171$, we obtain that $f(n)>0$ if $25\leq n\leq 107$. Thus, the disconnecting set is greater than $2k-\lambda-2$ in this case.

Suppose that $a=6$ and $108\leq n \leq 126$. If $A$ induces a clique, $k-2\lambda-1=\frac{2n-59}{3}>0$ and Lemma \ref{Clique} imply that $|S|\geq |N(A)|>2k-\lambda-2$. If $A$ does not induce a clique, by the same argument in case 2, we get that
\begin{align*}
|S|&\geq |N(A)|\geq |N(\{x,y,z\})|-(a-3) \\
          &\geq \left[k-2+2(k-1)-2\lambda-(\mu-1)+6\right]-3\\
          &=\frac{10n-136}{3}>\frac{7n-58}{3}.
\end{align*}

Suppose that $a\geq 7$ and $108\leq n \leq 126$. The hypothesis of Proposition \ref{2kl2} (with $c=7$) holds if $f(n)=-7n^3/27 + 374n^2/9 - 1104n + 150112/27>0$. As the roots of the polynomial $f(n)$ are approximately $128.4292,  25.2414, 6.6152$, we obtain that $f(n)>0$ if $108\leq n\leq 126$. Thus, the disconnecting set is greater than $2k-\lambda-2$. 

Assume now that $n\geq 127$. Let $p$ denote the number of points contained in the blocks corresponding to the vertices of $A$. We may assume that $p\leq n-p$. Otherwise, we can choose some other component of $V\backslash S$ as our $A$. Obviously, $a\leq \frac{p(p-1)}{12}$. If $A$ does not induce a clique, let $x$ and $y$ be two non-adjacent vertices of $A$. Then
\begin{align*}
|S|&\geq |N(A)|\geq |N(\{x,y\})|-|A\setminus \{x,y\}|=2k-\mu-(a-2)\\
&=2k-\mu-a+2=\frac{8n-74}{3}-a.
\end{align*}
If $a<\frac{n-16}{3}$, then $|S|\geq \frac{8n-74}{3}-a>\frac{7n-58}{3}$ and we are done. Otherwise, if $a\geq \frac{n-16}{3}$, then $\frac{p(p-1)}{12}\geq a\geq \frac{n-16}{3}$ which implies $p>\sqrt{2(n-16)}$. Now $B$ is spanned by at most $n-p$ points and thus, $|B|\leq \frac{(n-p)(n-p-1)}{12}$. This implies
\begin{align*}
|S|&=|V|-|A|-|B|\\
&\geq \frac{n(n-1)}{12}-\frac{p(p-1)}{12}-\frac{(n-p)(n-p-1)}{12}\\
&=\frac{p(n-p)}{6}\geq \frac{\sqrt{2(n-16)}(n-\sqrt{2(n-16)})}{6}.
\end{align*}
Now $\frac{\sqrt{2(n-16)}(n-\sqrt{2(n-16)})}{6}>\frac{7n-58}{3}$ is equivalent to 
$f(n)=n^3-144n^2+2368n-10952>0$ which is true when $n\geq 126$. Thus, $|S|>\frac{7n-58}{3}$ in this case. If $A$ induces a clique, $k-2\lambda-1=\frac{2n-59}{3}>0$ and Lemma \ref{Clique} imply that $|S|\geq |N(A)|>2k-\lambda-2$ which finishes our proof.
\end{proof}

\section{Brouwer's Conjecture for Latin square graphs}

An orthogonal array $OA(t,n)$ with parameters $t$ and $n$ is a $t\times n^2$ matrix with entries from the set $[n]=\{1,\dots,n\}$ such that the $n^2$ ordered pairs defined by any two distinct rows of the matrix are all distinct. It is well known that an orthogonal $OA(t,n)$ is equivalent to the existence of $t-2$ mutually orthogonal Latin squares (see \cite[Section 10.4]{GR}). In this paper, we use Godsil and Royle's notation $OA(t,n)$ from \cite[Section 10.4]{GR} to denote orthogonal arrays. Note that in other books such as Brouwer and Haemers \cite{BH} the orthogonal array $OA(t,n)$ is denoted by $OA(n,t)$. Given an orthogonal array $OA(t,n)$, one can define a graph $G$ as follows: The vertices of $G$ are the $n^2$ columns of the orthogonal array and two distinct columns are adjacent if they have the same entry in one coordinate position. The graph $G$ is an $(n^2, t(n-1), n-2+(t-1)(t-2), t(t-1))$-SRG. Any strongly regular graph with such parameters is called a Latin square graph (see \cite[Section 9.1.12]{BH}, \cite[Section 10.4]{GR} or \cite[Chapter 30]{VanLintWilson}). When $t=2$ and $n\neq 4$, such a graph must be the line graph of $K_{n,n}$ which is also the graph associated with an orthogonal array $OA(2,n)$ (see \cite[Problem 21F]{VanLintWilson}); this graph is OK by \cite[Section 8]{CKK}. When $t=2$ and $n=4$, there are two strongly regular graphs with parameters $(16,6,2,2)$, the line graph of $K_{4,4}$ and the Shrikhande graph (see \cite[p. 125]{BH}); they are both OK by \cite[Section 10]{CKK}.

The following theorem is the main result of this section and it shows that Latin square graphs with parameters $(n^2,t(n-1),n-2+(t-1)(t-2),t(t-1))$ satisfy Brouwer's Conjecture when $n\geq 2t\geq 6$. In particular, this will imply that strongly regular graphs obtained from orthogonal arrays $OA(t,n)$ satisfy Brouwer's Conjecture when $n\geq 2t\geq 6$.
\begin{theorem}\label{OA}
Let $t\geq 3$ be an integer. For any $n\geq 2t$, if $G$ is an $(n^2, t(n-1), n-2+(t-1)(t-2), t(t-1))$-SRG, then 
$$\kappa_2(G)=2k-\lambda-2=(2t-1)n-t^2+t-2.$$
The only disconnecting set of size $(2t-1)n-t^2+t-2$ are of the form $N(\{uv\})$, where $u$ and $v$ are two adjacent vertices in $G$.
\end{theorem}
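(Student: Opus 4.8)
The plan is to establish the lower bound and the extremal characterization simultaneously by proving the strict inequality $s := |S| > 2k-\lambda-2$ for every disconnecting set $S$ whose smallest remaining component $A$ has $a := |A| \geq 3$. Writing $B = V\setminus(A\cup S)$ and $b=|B|\geq a$ as in the introduction, a component of order $2$ is an edge $\{u,v\}$ with $N(\{u,v\})\subseteq S$ and $|N(\{u,v\})|=2k-\lambda-2$; hence the case $a=2$ already gives $s\geq 2k-\lambda-2$ with equality forcing $S=N(\{u,v\})$, and once one checks that for $n\geq 2t$ deleting such an edge-neighborhood leaves only non-singleton components (so that $\kappa_2(G)\leq 2k-\lambda-2$), the strict inequality for $a\geq 3$ yields both the value of $\kappa_2(G)$ and the fact that the extremal sets are exactly the edge-neighborhoods.

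The whole argument hinges on two identities special to these parameters: $\lambda-\mu = n-2t$ and, crucially, $(\lambda-\mu)^2+4(k-\mu)=n^2$, so that Lemma \ref{Haemers} collapses to the clean form $s\geq 4ab\mu/n^2$ with $\mu=t(t-1)$. I would also record $k-2\lambda-1=(t-2)n-2t^2+5t-1\geq t-1>0$ for all $t\geq 3$ and $n\geq 2t$, which lets Lemma \ref{Clique} dispose immediately of the case where $A$ is a clique: there $|S|\geq|N(A)|>2k-\lambda-2$. So from now on I may assume $A$ is not a clique and pick two non-adjacent $x,y\in A$.

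When $a$ is small I would bound $S$ by the neighborhood of the non-edge $\{x,y\}$: since every neighbor of an $A$-vertex lies in $A\cup S$ and $|N(\{x,y\})|=2k-\mu$, we get $s\geq 2k-\mu-(a-2)$, which exceeds $2k-\lambda-2$ exactly when $a<\lambda-\mu+4=n-2t+4$. This leaves only $a\geq n-2t+4$, which I would handle with Proposition \ref{2kl2} applied with the \emph{growing} value $c=n-2t+4$ (an integer $\geq 4$). Assuming $s\leq 2k-\lambda-2$ forces $a+b\geq v-(2k-\lambda-2)=:M$, and since $b\geq a\geq c$ together with $M\geq 2c$ (which reduces at $n=2t$ to $(t+3)(t-2)\geq 0$), one has $ab\geq c(M-c)$ with $M-c=(n-t)^2+(t-2)$. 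Via the identity above, $s\geq 4\mu c(M-c)/n^2$ contradicts $s\leq 2k-\lambda-2=n^2-M$ as soon as
\[
4t(t-1)(n-2t+4)\big[(n-t)^2+t-2\big] > n^2\big[(2t-1)n-t^2+t-2\big],
\]
which is precisely the hypothesis \eqref{eq2kl2} of Proposition \ref{2kl2} written out for these parameters.

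The heart of the proof — and the step I expect to be the main obstacle — is verifying this last inequality uniformly in $t\geq 3$ and $n\geq 2t$. Both sides are cubic in $n$; their difference has positive leading coefficient $4t^2-6t+1$, and at the boundary $n=2t$ the inequality reduces to $(t+4)(t-2)>0$, true for $t\geq 3$. The difficulty is to rule out a root in the interval $[2t,\infty)$ for \emph{every} $t$, which I would do by showing that the value and derivative of the difference at $n=2t$ are positive and controlling the remaining coefficients, checking the finitely many genuinely small pairs $(t,n)$ by hand should the uniform estimate become delicate. As independent confirmation for large $n$ I would also invoke the net structure exactly as in Theorems \ref{STS} and \ref{SQS}: fixing two parallel classes, $A$ lies in an $r\times c$ subgrid while $B$, sharing no line with $A$, lies in the complementary $(n-r)\times(n-c)$ subgrid, giving $s\geq n^2-rc-(n-r)(n-c)=n(r+c)-2rc$; with $r,c\geq 2$ (as $A$ is not a clique) and $rc\geq a\geq n-2t+4$, optimization yields $s\geq 2\sqrt{n-2t+4}\,\big(n-\sqrt{n-2t+4}\big)$, which comfortably exceeds $2k-\lambda-2$ once $n$ is large. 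Either route closes the case $a\geq n-2t+4$ and finishes the proof.
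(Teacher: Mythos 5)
Your argument follows essentially the same strategy as the paper's: the clique case is killed by Lemma \ref{Clique} via $k-2\lambda-1=(t-2)n-2t^2+5t-1\geq t-1>0$, and the non-clique case combines a lower bound on $a$ (under the assumption $s\leq 2k-\lambda-2$) with Lemma \ref{Haemers} and the identity $(\lambda-\mu)^2+4(k-\mu)=n^2$. The one substantive difference is the source of the lower bound on $a$: you use a non-edge, which gives $s\geq 2k-\mu-a+2$ and the threshold $a\geq \lambda-\mu+4=n-2t+4$, and then run Proposition \ref{2kl2} with $c=n-2t+4$; the paper uses an induced path of length $2$, which gives $a+s\geq 3k-2\lambda-\mu$ and hence the much stronger bound $a\geq k-\lambda-\mu+2=(t-1)(n-2t+1)+3$. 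The stronger bound lets the paper finish with the crude estimate $n^2-(3t-2)n+3t^2-4t\geq n^2/4$ and a comparison that is essentially linear in $n$; your weaker threshold leads to the tighter cubic inequality you display, and that is exactly the step you leave open. The inequality is true, but your proposed certificate (positive value and derivative at $n=2t$, positive leading coefficient) does not by itself exclude a dip of a cubic on $[2t,\infty)$. It can be completed as follows: writing $D(n)$ for the left side minus the right side of your displayed inequality, one computes $D(2t)=4t(t-1)(t+4)(t-2)>0$, $D'(2t)=4t(t^3+3t^2-9t+4)>0$ and $D''(2t)=16t^3-6t^2-22t+4>0$; since the leading coefficient $4t^2-6t+1$ is positive, $D''$ is increasing and hence positive on $[2t,\infty)$, so $D'$ is increasing hence positive there, so $D$ is increasing hence positive there. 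With that supplement your main line is correct. (Your back-up ``grid'' argument is shakier: minimizing $n(r+c)-2rc$ subject only to $rc\geq a$ and $r,c\geq 2$ is not achieved at $r=c$ in general --- the expression can even vanish at $r=c=n$ --- and one must also use $(n-r)(n-c)\geq b\geq a$; but since it is offered only as confirmation it does not affect the proof.)
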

\begin{proof} Recall that from our assumption on page 2 that $A$ is the vertex set of a component of $G\setminus S$ of minimum size, we have that $b=|B|\geq |A|=a$.

If $A$ induces a clique in $G$, then the inequality $k-2\lambda-1=(t-2)n-2t^2+5t-1\geq (t-2)(2t)-2t^2+5t-1=t-1>0$ and Lemma \ref{Clique} imply that $|S|>2k-\lambda-2$.

If $A$ does not induce a clique in $G$, then $A$ must contain an induced path of length $2$. Inclusion and exclusion implies that $a+s\geq 3k-2\lambda-\mu=(3t-2)n-3t^2+4t$. 

Assume by contradiction that $s\leq 2k-\lambda-2=(2t-1)n-t^2+t-2$. Then 
\begin{align*}
a&=(a+s)-s\geq (3k-2\lambda-\mu)-(2k-\lambda-2)=k-\lambda-\mu+2\\
&=(t-1)(n-2t+1)+3.
\end{align*}
Since $b\geq a$ and $a+b=v-s$, we get that $b\geq a\geq (t-1)(n-2t+1)+3$ and $a\leq (v-s)/2$. We write $ab=a(v-s-a)$ and we regard the expression $a(v-s-a)$ as a function of $a$. Let $f(x)=x(v-s-x)$ where $(t-1)(n-2t+1)+3\leq x\leq (v-s)/2$. Since $f'(x)=(v-s)-2x\geq 0$ for $x\leq (v-s)/2$, we deduce that $f(x)$ will increase on the interval $[(t-1)(n-2t+1)+2,(v-s)/2]$. Therefore, $f(x)$ will be minimum when $x=(t-1)(n-2t+1)+3$. Hence, we obtain that
\begin{align*}
ab&=a(v-s-a)=f(a)\geq f\left((t-1)(n-2t+1)+3\right)\\
&=[(t-1)(n-2t+1)+3][v-s-(t-1)(n-2t+1)-3]\\
&\geq [(t-1)(n-2t+1)+3][n^2-(2t-1)n+t^2-t+2-(t-1)(n-2t+1)-3]\\
&=[(t-1)(n-2t+1)+3][n^2-(3t-2)n+3t^2-4t],
\end{align*}
where the last inequality follows from $v-s\geq n^2-(2t-1)n+t^2-t+2$.

By Lemma \ref{Haemers}, we have
\begin{equation}\label{s}
s\geq \frac{4t(t-1)[(t-1)(n-2t+1)+3][n^2-(3t-2)n+3t^2-4t]}{n^2}.
\end{equation}
Note that  $n^2-(3t-2)n+3t^2-4t\geq \frac{n^2}{4}$. This is because for any fixed $t\geq 3$, the function $g(n)=3n^2-(12t-8)n+12t^2-16t$ is increasing in $[2t,\infty]$. Thus, $g(n)\geq g(2t)= 0$. Using this fact in inequality \eqref{s}, we obtain that
\begin{equation}\label{t}
s\geq t(t-1)[(t-1)(n-2t+1)+3].
\end{equation}
Let $f(n)=t(t-1)[(t-1)(n-2t+1)+3]-[(2t-1)n-t^2+t-2]$. When $t\geq 3$, $f(n)$ is 
increasing in $[2t, \infty]$. Thus, 
\begin{align*}
f(n)&\geq f(2t)=t(t-1)(t+2)-(3t^2-t-2)\\
&>t(t-1)(t+2)-3t^2\\
&=t(t^2-2t-2) >0.
\end{align*}
Combining with inequality \eqref{t}, we have $s>(2t-1)n-t^2+t-2=2k-\lambda-2$. This is a contradiction that finishes our proof.
\end{proof}

The following consequence of Theorem \ref{OA} extends Lemma 9.1 from \cite{CKK}. 
\begin{corollary}\label{t_34}
For $t\in \{3,4\}$ and any integer $n\geq t$, any strongly regular graph associated with an $OA(t,n)$ satisfies Brouwer's Conjecture.
\end{corollary}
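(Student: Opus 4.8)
The plan is to obtain the corollary from Theorem \ref{OA} together with a finite check of the remaining small cases. Theorem \ref{OA} already establishes $\kappa_2(G)=2k-\lambda-2$, with the minimizers being exactly the edge neighborhoods, for every graph associated with an $OA(t,n)$ whenever $n\geq 2t$; for $t=3$ this covers all $n\geq 6$ and for $t=4$ all $n\geq 8$. To reach every $n\geq t$ it therefore remains only to treat the parameter sets with $t\leq n\leq 2t-1$: for $t=3$ these are $(9,6,3,6)$, $(16,9,4,6)$ and $(25,12,5,6)$, and for $t=4$ they are $(16,12,8,12)$, $(25,16,9,12)$, $(36,20,10,12)$ and $(49,24,11,12)$.

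Several of these I would dispose of immediately. When $n=t$ one has $k=t(t-1)=\mu$, and a direct computation gives the complement the parameters $(t^2,t-1,t-2,0)$, that is, a disjoint union of $t$ cliques $K_t$; hence $G$ is the complete multipartite graph with $t$ parts of size $t$ (namely $K_{3,3,3}$ and $K_{4,4,4,4}$). In such a graph two vertices are non-adjacent only when they share a part, so no connected subgraph on at least two vertices can be separated from the rest, there is no disconnecting set leaving only non-singleton components, and $G$ is OK by \cite[Lemma 2.1]{CKK}. The set $(36,20,10,12)$ would require an $OA(4,6)$, equivalently two mutually orthogonal Latin squares of order $6$, which do not exist, so that case is vacuous. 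The two $t=3$ graphs $(16,9,4,6)$ and $(25,12,5,6)$ fall within the scope of \cite[Lemma 9.1]{CKK}, the result the present corollary extends.

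This leaves the two $t=4$ graphs $(49,24,11,12)$ and $(25,16,9,12)$, which I would settle directly, using the reduction on page~2: for the smallest component $A$ with $a=|A|\geq 3$ one has $|S|\geq|N(A)|$, while an edge neighborhood is a disconnecting set of size $2k-\lambda-2$, so it suffices to show that every such $A$ forces $|S|\geq 2k-\lambda-2$. For $(49,24,11,12)$ this follows from Lemma \ref{Haemers} alone: writing $b=v-a-s$, the inequality $s\geq 48a(49-a-s)/49$ solves to $s\geq 35=2k-\lambda-2$ for every $a\geq 3$ (the forced value is smallest at $a=3$ and increases thereafter). For $(25,16,9,12)$ Lemma \ref{Haemers} is too weak, and I would argue structurally. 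This graph is $\overline{L_2(5)}$: its vertices are the cells of a $5\times5$ array, and two cells are adjacent exactly when they differ in both coordinates. Every connected $A$ with $a\geq 3$ contains a connected three-element subset, and for each such subset the set of cells sharing a row or a column with all three members has at most one element; hence $V\setminus(A\cup N(A))$ has at most one vertex and no second non-singleton component can survive. Thus the only valid cuts have $a=2$ and are edge neighborhoods, so $\kappa_2(G)=2k-\lambda-2$.

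The main obstacle is precisely this last graph. For $(25,16,9,12)$ one has $k-2\lambda-1=-3\leq 0$, so Lemma \ref{Clique} gives nothing when $A$ is a clique, and the bound of Lemma \ref{Haemers} is not strict for the smallest admissible components (it yields only $s\geq 19<21$ at $a=3$), so Lemma \ref{Clique}, Proposition \ref{2kl2} and the uniform inequality that drives Theorem \ref{OA} all degenerate. These few small, dense, non-complete-multipartite parameter sets cannot be absorbed into the general argument and must be settled one at a time by the structural reasoning indicated above.
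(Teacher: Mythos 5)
Your skeleton is the paper's: invoke Theorem \ref{OA} for $n\geq 2t$ and then check the finitely many parameter sets with $t\leq n\leq 2t-1$, which you enumerate correctly. You also dispatch $n=t$ (complete multipartite) and the nonexistence of $OA(4,6)$ exactly as the paper does. The divergence is in how the four surviving cases are closed: the paper simply cites \cite[Lemmas 2.1 and 2.5]{CKK} for $(16,9,4,6)$, $(25,12,5,6)$, $(25,16,9,12)$ and $(49,24,11,12)$, whereas you give self-contained arguments from the tools inside this paper. That is a legitimate and arguably more transparent route --- in particular your identification of the $(25,16,9,12)$ graph with $\overline{L(K_{5,5})}$ and the observation that any connected triple of cells has at most one common non-neighbor (forcing $|B|\leq 1$, contradicting non-singleton components) is a clean structural proof, and your diagnosis that Lemma \ref{Clique}, Lemma \ref{Haemers} and Proposition \ref{2kl2} all degenerate there is accurate. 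You should, however, justify the identification (the complement has parameters $(25,8,3,2)$, so Shrikhande's theorem applies).

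One concrete flaw: in the $(49,24,11,12)$ case your parenthetical claim that the bound forced by Lemma \ref{Haemers} ``is smallest at $a=3$ and increases thereafter'' is false. Solving $s\geq \tfrac{48}{49}a(49-a-s)$ gives $s\geq \tfrac{48a(49-a)}{49+48a}$, which peaks near $a=7$ and then decreases; at $a=13$ it yields only $s\geq 33.4$, i.e.\ $s\geq 34<35$. The conclusion is still recoverable: for $a\geq 13$ the constraint $b\geq a$ together with Lemma \ref{Haemers} gives $s\geq \tfrac{48\cdot 13\cdot 13}{49}>49$, which is impossible, so only $3\leq a\leq 12$ can occur, and on that range the bound exceeds $34$ and hence forces $s\geq 35$ by integrality. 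As written, though, the step ``for every $a\geq 3$'' is not established, so you need to add the elimination of $a\geq 13$ (or fall back on the paper's citation of \cite[Lemma 2.5]{CKK}) to make this case airtight.
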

\begin{proof}
For $t=3$, we need to see what happens when $n\in \{3,4,5\}$. If $n=3$, the corresponding graph is the complete three-partite graph $K_{3,3,3}$ which is OK by \cite[Lemma 2.1]{CKK}. If $n=4$, the corresponding graph is a $(16,9,4,6)$-SRG which is OK by \cite[Lemma 2.1]{CKK}. If $n=5$, the corresponding graph is a $(25,12,5,6)$-SRG which is OK by \cite[Lemma 2.5]{CKK}. 
 
For $t=4$, we need to check what happens when $n\in \{4,5,6,7\}$. If $n=4$, the corresponding graph is the complete  four-partite graph $K_{4,4,4,4}$ which is OK by \cite[Lemma 2.1]{CKK}. If $n=5$, the corresponding graph is a $(25,16,9,12)$-SRG is OK by \cite[Lemma 2.1]{CKK}. If $n=6$, as shown by Tarry (see also Stinson \cite{Stinson}) an orthogonal array $OA(4,6)$ does not exist. We remark here that McKay and Spence proved that there are exactly $32458$ strongly regular graphs with parameters $(36,20,10,12)$. If $n=7$, the corresponding graph is a $(49,24,11,12)$-SRG which is OK \cite[Lemma 2.5]{CKK}. 
\end{proof}

\section{The edge version of Brouwer's Conjecture}

In this section, we give a short proof of the edge version of Brouwer's Conjecture. We remark here that similar results were obtained by Hamidoune, Llad\'{o}, Serra and Tindell \cite{HLST} for some families of vertex-transitive graphs. 
\begin{theorem}
Let $G$ be a connected $(v,k,\lambda,\mu)$-SRG. If $A$ is a subset of vertices with $2\leq |A|\leq v/2$, then 
\begin{equation}\label{edgeBrouwer}
e(A,A^c)\geq 2k-2.
\end{equation}
Equality happens if and only if $A$ consists of two adjacent vertices or $A$ induces a triangle in $K_{2,2,2}$ or $A$ induces a triangle in the line graph of $K_{3,3}$. 
\end{theorem}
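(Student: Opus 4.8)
The plan is to work with the identity $e(A,A^c)=ka-2e(A)$, where $e(A)$ denotes the number of edges inside $A$, so that proving $e(A,A^c)\ge 2k-2$ becomes equivalent to the upper bound $2e(A)\le k(a-2)+2$ on the internal edges. I would first dispose of the two smallest cases by hand, since these are exactly where equality can occur. For $a=2$ one has $e(A)\in\{0,1\}$, hence $e(A,A^c)\in\{2k-2,2k\}$, with the value $2k-2$ attained precisely when the two vertices are adjacent. For $a=3$ the induced subgraph has at most three edges and $e(A,A^c)=3k-2e(A)$; the only way to reach $2k-2$ is a triangle ($e(A)=3$) with $k=4$, while a path ($e(A)=2$) would force $k=2$, impossible since $a=3\le v/2$ already gives $v\ge 6$. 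A short parameter check, using $k(k-1-\lambda)=(v-1-k)\mu$ together with integrality of the eigenvalue multiplicities, then shows that the only proper connected strongly regular graphs with $k=4$ containing a triangle are $K_{2,2,2}$ and the line graph of $K_{3,3}$, producing exactly the two remaining equality cases.

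For $a\ge 4$ I would switch to a spectral argument and aim for the strict inequality $e(A,A^c)>2k-2$. Writing the spectral decomposition $2e(A)=\tfrac{ka^2}{v}+\theta_2\|E_2\mathbf 1_A\|^2+\theta_v\|E_v\mathbf 1_A\|^2$ and using $\theta_v<0\le\theta_2$ together with $\|E_2\mathbf 1_A\|^2+\|E_v\mathbf 1_A\|^2=\tfrac{a(v-a)}{v}$ yields the interlacing (expander mixing) bound
\[
e(A,A^c)\ \ge\ (k-\theta_2)\,\frac{a(v-a)}{v}.
\]
Since $a(v-a)/v$ is increasing on $[0,v/2]$, it suffices to verify this at $a=4$. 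Here I would exploit the identity $k-\theta_2=\mu+\theta_2(|\theta_v|-1)$; combined with $(|\theta_v|-2)(k-\mu)\ge 0$ (valid since $|\theta_v|\ge 2$ for every proper connected SRG reaching $a\ge 4$, and $k-\mu=\theta_2|\theta_v|\ge 0$), this gives the clean uniform estimate $k-\theta_2\ge \tfrac{k+\mu}{2}$. Substituting reduces the target to the elementary inequality $(\mu+1)(v-4)>4(k-1)$, that is, $v>\tfrac{4(k+\mu)}{\mu+1}$.

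The main obstacle is establishing this last inequality for every feasible parameter set with $v\ge 8$ (as forced by $a\ge 4\le v/2$). It holds comfortably for sparse graphs: for instance $\mu=1$ together with $k\ge(\lambda+1)(\lambda+2)$ forces $v>2k+2$, which is exactly the required bound when $\mu=1$. For complete multipartite graphs one has $\theta_2=0$ and $k=\mu$, so $k-\theta_2=\tfrac{k+\mu}{2}$ and the target becomes $v>\tfrac{8k}{k+1}<8$, automatic. The delicate regime is that of dense graphs with small $\mu$, where I would use the feasibility relation $\mu(v-1-k)=k(k-1-\lambda)$ (and, if necessary, the integrality, Krein, and absolute bounds) to exclude the offending combinations, checking the finitely many small sporadic parameter sets directly against the tables. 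I expect this verification, rather than the interlacing step, to be where the real work lies: once $v>\tfrac{4(k+\mu)}{\mu+1}$ is secured, the strict inequality $e(A,A^c)>2k-2$ for all $a\ge 4$ follows at once, and combining it with the $a\in\{2,3\}$ analysis delivers both the value $2k-2$ and the stated equality characterization.
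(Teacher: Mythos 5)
Your overall strategy (settle $|A|\in\{2,3\}$ by counting internal edges, then use the Laplacian/expander-mixing bound $e(A,A^c)\geq (k-\theta_2)\frac{a(v-a)}{v}$ for larger $A$) is close in spirit to the paper's, which handles $k\in\{3,4\}$ by enumerating the graphs, uses the degree bound $e(A,A^c)\geq a(k+1-a)$ for $3\leq a\leq k-2$, and reserves the spectral bound for $a\geq k-1$, where it closes by analyzing the few graphs with $k-\theta_2\leq 4$. However, as written your proposal has a genuine gap at its crux: the entire case $a\geq 4$ rests on the parameter inequality $(\mu+1)(v-4)>4(k-1)$, equivalently $v>\frac{4(k+\mu)}{\mu+1}$, and you do not prove it --- you explicitly defer it to ``checking the finitely many small sporadic parameter sets directly against the tables,'' without a bound that reduces the problem to finitely many cases or an enumeration of them. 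Deferring the step you yourself identify as ``where the real work lies'' means the proof is not complete. The gap is fixable: writing $\mu v=\mu+k\mu+k(k-1-\lambda)$ and using $v\geq k+2$, the inequality reduces to $\mu(k-3)+k(k-1-\lambda)-3k+2>0$, which is immediate when $k-1-\lambda\geq 3$ and $k\geq 3$; the leftover cases $k-1-\lambda=(\theta_2+1)(-\theta_v-1)\in\{1,2\}$ force $G$ to be $K_{m\times 2}$, $K_{m\times 3}$, a graph with $(\theta_2,\theta_v)=(1,-2)$ (Petersen, Paley$(9)$, $T(5)$), or the pentagon, and one checks these directly (the only failures occur at $v=6$, outside your regime $v\geq 2a\geq 8$). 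But none of this is in your write-up, and it is exactly the content that was needed.

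Two smaller points. First, in the case $a=3$ you argue that equality forces a triangle with $k=4$, but you never exclude the possibility $e(A,A^c)<2k-2$, which a triangle in a graph with $k=3$ would produce ($3k-6=3<4$); you need to observe that the non-complete strongly regular graphs with $k=3$ (Petersen and $K_{3,3}$) are triangle-free. Second, your uniform estimate $k-\theta_2\geq\frac{k+\mu}{2}$ is correct --- it is equivalent to $\theta_2(|\theta_v|-2)\geq 0$, which holds since non-conference SRGs have integer $\theta_v\leq -2$ and conference graphs with $v\geq 9$ have $|\theta_v|=\frac{1+\sqrt{v}}{2}\geq 2$ --- and it is a genuinely nicer device than the paper's case split on whether $k-\theta_2\leq 4$; if you supply the missing verification above, your route avoids the paper's terminal enumeration of the $(16,6,2,2)$ and $(16,5,0,2)$ graphs.
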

\begin{proof} If $k=3$, then $G$ is $K_{3,3}$ or the Petersen graph. If $G$ is $K_{3,3}$, the proof is immediate. If $G$ is the Petersen graph, and $A$ is a subset of vertices with $3\leq |A|\leq 4$, then the number of edges contained in $A$ is at most $|A|-1$ and therefore $e(A,A^c)\geq 3|A|-2(|A|-1)=|A|+2\geq 5$. If $|A|=5$, then the number of edges inside $A$ is at most $|A|=5$ and therefore $e(A,A^c)\geq 3|A|-2|A|=|A|=5$. If $k=4$, then $G$ is $K_{4,4}, K_{2,2,2}$ or the line graph of $K_{3,3}$. If $G$ is $K_{4,4}$, the proof is immediate. If $G$ is $K_{2,2,2}$, and $A$ is a subset of $3$ vertices inducing a triangle, then $e(A,A^c)=6$; in all the other cases, $e(A,A^c)>6$. If $G$ is the line graph of $K_{4,4}$ and $A$ is a subset of vertices with $|A|=3$, then $e(A,A^c)\geq 6$ with equality if and only if $A$ induces a clique of order $3$. If $|A|=4$, then the number of edges contained in $A$ is at most $4$ and therefore, $e(A,A^c)\geq 4|A|-8=8$.

Assume $k\geq 5$. If $3\leq |A|\leq k-2$, then $e(A,A^c)\geq |A|(k+1-|A|)\geq 3(k-2)>2k-2$ as $k\geq 5$. If $k-1\leq |A|\leq v/2$, then $e(A,A^c)\geq \frac{(k-\theta_2)a(v-a)}{v}\geq \frac{(k-\theta_2)(k-1)}{2}$ (see \cite[Corollary 4.8.4]{BH} or \cite{Mohar}). If $G$ is a conference graph of parameters $(4t+1,2t,t-1,t)$, then $k-\theta_2=\frac{4t+1-\sqrt{4t+1}}{2}>4$ for $t\geq 3$. These inequalities imply that $e(A,A^c)>2(k-1)$ which finishes the proof of this case. If $G$ is not a conference graph and $k-\theta_2>4$, then $e(A,A^c)>2k-2$ and we are done again. The only case left is when $G$ is not a conference graph and $k-\theta_2\leq 4$. In this case, the eigenvalues of $G$ are integers, $\theta_2\geq k-4$ and $\theta_v\leq -2$ as $G$ is not a complete graph. Since $k-1\geq k-\mu=\theta_2(-\theta_v)\geq 2k-8$, we get $5\leq k\leq 7$. If $\theta_v=-2$, then by Seidel's characterization of strongly regular graphs with minimum eigenvalue $-2$ (see \cite[Section 9.2]{BH} or \cite{S}), $G$ must have parameters $(16,6,2,2)$ and eigenvalue $6, 2$ and $-2$. In this case, $e(A,A^c)\geq \frac{(k-\theta_2)a(v-a)}{v}=\frac{a(16-a)}{4}>10$ for $k-1=5\leq a\leq 8=v/2$. If $\theta_v\leq -3$, then we obtain $k-1\geq k-\mu=\theta_2(-\theta_v)\geq 3k-12$ which implies $k=5$. In this case, the graph $G$ is the folded $5$-cube which has parameters $(16,5,0,2)$ and eigenvalues $5, 1$ and $-3$. As before, $e(A,A^c)\geq \frac{(k-\theta_2)a(v-a)}{v}=\frac{a(16-a)}{4}>8$ for $3\leq a\leq 8$. This finishes our proof.
\end{proof}

\section{Final Remarks}

In 1979, Neumaier \cite{N} classified strongly regular graphs with smallest eigenvalue $-m$, where $m\geq 2$ is a fixed integer, by showing that, with finitely many exceptions, such graphs are of one of the following types:
\begin{enumerate}[(a)]
\item Complete multipartite graphs with classes of size $m$, 
\item Latin square graphs with parameters $(n^2,m(n-1),n-2+(m-1)(m-2),m(m-1))$, 
\item Block graphs of Steiner $m$-systems.
\end{enumerate}

For any fixed integer $m\geq 3$, the graphs in case (a) satisfy Brouwer's Conjecture by \cite[Lemma 2.1]{CKK}. By our results in Sections 3 and 4, all, but finitely many strongly regular graphs of type (b) or (c), satisfy Brouwer's Conjecture. This means that there are finitely many strongly regular graphs with smallest eigenvalue $-m$ that might not satisfy Brouwer's Conjecture. When $m=2$, Cioab\u{a}, Kim and Koolen \cite{CKK} proved that among the strongly regular graphs with smallest eigenvalue $-2$, the only counterexamples of Brouwer's Conjecture are the triangular graphs $T(m)$, where $m\geq 6$. By Theorem \ref{STS}, Theorem \ref{SQS} and Corollary \ref{t_34}, we know that when $m=3,4$, the strongly regular graphs of type (b) and (c) satisfy Brouwer's Conjecture. 

We performed a computer search among feasible parameter sets of strongly regular graphs with smallest eigenvalue $-3$ or $-4$ for parameter sets $(v,k,\lambda,\mu)$ that do not satisfy the hypothesis of Proposition 2.4 in \cite{CKK}. When $m=3$, there are $321$ such parameters that are the same as the parameters of block graphs of Steiner triple systems and $66$ other parameter sets. Therefore, the status of Brouwer's Conjecture is established for all strongly regular graphs with minimum eigenvalue $-3$ with the exception of $387$ possible parameters. When $m=4$, there are $1532$ parameters that are the same as the parameters of block graphs of Steiner 2-$(n,4,1)$-designs and $232$ other parameters. Thus, the status of Brouwer's Conjecture is established for all strongly regular graphs with minimum eigenvalue $-4$ with the exception of $1764$ possible parameters.

\section*{Acknowledgments}
We are grateful to the referees for their excellent comments and suggestions.

\end{document}